\title{Compactness versus hugeness at successor cardinals}
\author{Sean Cox}
\author{Monroe Eskew}
\date{}
\newtheorem{theorem}{Theorem}[section]
\newtheorem{lemma}[theorem]{Lemma}
\newtheorem{proposition}[theorem]{Proposition}
\newtheorem{corollary}[theorem]{Corollary}
\newtheorem{claim}[theorem]{Claim}
\newtheorem{remark}[theorem]{Remark}
\DeclareMathOperator{\dom}{dom}
\DeclareMathOperator{\ot}{ot}
\DeclareMathOperator{\cf}{cf}
\DeclareMathOperator{\cof}{cof}
\DeclareMathOperator{\add}{Add}
\DeclareMathOperator{\col}{Col}
\DeclareMathOperator{\ord}{Ord}
\DeclareMathOperator{\crit}{crit}
\DeclareMathOperator{\id}{id}
\DeclareMathOperator{\ns}{NS}
\newcommand{\chang}{\twoheadrightarrow}
\newcommand{\p}{\mathcal{P}}
\newcommand{\la}{\langle}
\newcommand{\ra}{\rangle}
\newcommand{\los}{\L o\'s}
\begin{document}
\maketitle

\begin{abstract}
If $\kappa$ is regular and $2^{<\kappa}\leq\kappa^+$, then the existence of a weakly presaturated ideal on $\kappa^+$ implies $\square^*_\kappa$.  This partially answers a question of Foreman and Magidor about the approachability ideal on $\omega_2$.  As a corollary, we show that if there is a presaturated ideal $I$ on $\omega_2$ such that $\p(\omega_2)/I$ is semiproper, then CH holds.  We also show some barriers to getting the tree property and a saturated ideal simultaneously on a successor cardinal from conventional forcing methods.
\end{abstract}

The motivating question for this work is:  To what extent are large cardinal properties of small cardinals mutually consistent?  We focus here on a tension between versions of compactness and hugeness that make sense for successor cardinals.  We show that if $\kappa$ is regular and $2^{<\kappa}\leq\kappa^+$, then we cannot have both the tree property at $\kappa^+$ and generic hugeness properties of $\kappa^+$ such as $(\kappa^{++},\kappa^+) \chang (\kappa^+,\kappa)$ or the existence of a saturated ideal on $\kappa^+$.  As a corollary, we find a tight connection between the Continuum Hypothesis and the forcing properties of the Boolean algebras associated to saturated ideals on $\omega_2$.  We do not know whether these compactness and hugeness properties of successor cardinals are consistent with each other in the absence of the cardinal arithmetic assumption.

In Section \ref{prelim}, we discuss preliminaries about ideals and trees.  In Section \ref{wssi}, we derive $\square^*_\kappa$ from several generic hugeness properties of $\kappa^+$ under cardinal arithmetic assumptions that are compatible with the tree property at $\kappa^+$.  Proposition \ref{changsquare1} and Theorem \ref{idealapprox} are due to the first author, while Theorem \ref{wps} and Corollary \ref{ch} are due to the second author.  Section \ref{wsli} presents some barriers to combining compactness and hugeness properties at successor cardinals, showing that a general template for generically lifting huge and almost-huge embeddings, while collapsing the relevant cardinals down to successors, must force the failure of the tree property at the critical point.

\section{Preliminaries}
\label{prelim}
\subsection{Ideals and generic embeddings}
Proofs of many of the facts stated in this subsection can be found in \cite{MR2768692}.
An \emph{ideal} on a set $Z$ is a collection of subsets of $Z$ closed under taking subsets and pairwise unions.  If $\kappa$ is a cardinal, we say that an ideal is $\kappa$-complete if it is closed under unions of size $<\kappa$.   If $Z \subseteq \p(X)$ and $I$ is an ideal on $Z$, then we say that $I$ is \emph{normal} if for all $x \in X$, $\{ z \in Z : x \notin z \} \in I$, and $I$ closed under \emph{diagonal unions} of the form $\nabla_{x \in X} A_x := \{ z : \exists x \in z(z \in A_x) \}$.  It is not difficult to show that if $I$ is a normal ideal on $Z \subseteq \p(\lambda)$ and $\kappa \leq \lambda$, then $I$ is $\kappa$-complete if and only if for every $\alpha<\kappa$, $\{ z \in Z : \alpha \nsubseteq z \} \in I$.  The smallest normal ideal on a set $Z$ is the \emph{nonstationary ideal}, $\ns_Z$, and its dual filter is called the \emph{club filter}, which is generated by the collection of sets $C_F \subseteq Z$, where $F : X^{<\omega} \to X$ and $C_F = \{ z \in Z : F[z^{<\omega}] \subseteq z \}$. If $I$ is an ideal on $Z$ and $A \subseteq Z$, then $I \upharpoonright A$ denotes the smallest ideal containing $I \cup \{ Z \setminus A \}$.  We say $I$ \emph{concentrates on $A$} if $Z \setminus A \in I$.  We refer to the collection $\p(Z)\setminus I$ as $I^+$ and say the members are \emph{$I$-positive}.  The $\ns$-positive sets are called \emph{stationary}.

If $I$ is an ideal on $Z$, $\p(Z)/I$ is the quotient of the Boolean algebra $\p(Z)$ by the equivalence relation $A \sim B \Leftrightarrow (A\setminus B) \cup(B\setminus A)\in I$.   If we take a generic filter $G \subseteq \p(Z)/I$, then $\bigcup G$ is a ultrafilter over $\p(Z)^V$, and we can form the generic ultrapower embedding $j_G : V \to V^Z/G$.  The critical point of $j_G$ is the least $\kappa$ such that for some $A \in \bigcup G$, $I \restriction A$ is $\kappa$-complete and $A$ is the union of $\kappa$-many sets from $I$.  If $Z \subseteq \p(X)$ and $I$ is normal, then the well-founded part of $V^Z/G$, which we will identify with its transitive collapse, has height at least $|X|^+$, and the pointwise image $j_G[X]$ is represented in $V^Z/G$ by the identity function on $Z$.  \los' Theorem is quite useful in determining properties of $j_G[X]$ via what is satisfied by $I$-almost-all $z \in Z$.

The key to showing the degree of well-foundedness of $V^Z/G$ is the notion of a \emph{canonical function}.  If $\lambda$ is a cardinal and $Z \subseteq \p_\lambda(\lambda)$, these are functions from $Z$ to $\lambda$ that are forced to represent particular ordinals below $\lambda^+$ in any generic ultrapower arising from a normal ideal on $Z$.  To define the canonical function representing $\alpha$, we choose some surjection $\sigma_\alpha : \lambda \to \alpha$.  For any two surjections $f,g : \lambda \to \alpha$, the set of $z \in Z$ such that $f[z] \not= g[z]$ is nonstationary.  It is easy to check that for any normal ideal $I$ on $Z$ and any generic $G \subseteq \p(Z)/I$, $\alpha$ is represented in the ultrapower by the function $z \mapsto \ot(\sigma_\alpha[z])$, where for a set of ordinals $s$, $\ot(s)$ denotes its order-type.

We say a normal ideal $I$ on $Z \subseteq \p(X)$ is \emph{saturated} if $\p(Z)/I$ has the $|X|^+$-chain condition.  We say $I$ is \emph{presaturated} when for every collection of maximal antichains $\la\mathcal A_x : x \in X \ra$, there is a dense set of $B$ in $\p(Z)/I$ such that for all $x\in X$, $\{ A \in \mathcal A_x : A \cap B \notin I \}$ has size $\leq |X|$.  Obviously, saturation implies presaturation.  It is well-known that if $I$ is a presaturated normal ideal on $Z \subseteq \p(\lambda)$, then the generic ultrapower $V^Z/G$ is forced to be closed under $\lambda$-sequences from $V[G]$.

An ideal is called \emph{precipitous} when the generic ultrapower is forced to be well-founded.  An ideal on a successor cardinal $\kappa$ is called \emph{strong} when it is precipitous and it is forced that $j_G(\kappa) = (\kappa^+)^V$.  It is well-known that for normal ideals on successor cardinals, presaturated implies strong.  The following further weakening is due to Woodin \cite{MR2723878}.  We say that a normal ideal $I$ is \emph{weakly presaturated} if it is forced that  $V^Z/G$ is well-founded up to $(\kappa^+)^V +1$ and $j_G(\kappa) = (\kappa^+)^V$.  The following characterization comes from translating this into a forcing relation in $V$:
\begin{proposition}
Suppose $\kappa$ is a successor cardinal and $I$ is a normal ideal on $\kappa$.  The following are equivalent:
\begin{enumerate}
\item $I$ is weakly presaturated.
\item For all $I$-positive sets $A$ and all functions $f : \kappa \to \kappa$, $f$ is bounded by a canonical function on an $I$-positive subset of $A$.
\end{enumerate}
\end{proposition}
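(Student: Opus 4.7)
The plan is to unpack (1) into a forcing-relation statement and match it with (2), exploiting the canonical function representation $[c_\alpha]=\alpha$.

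For (1) $\Rightarrow$ (2): given $f:\kappa\to\kappa$ and positive $A$, force below $A$. Weak presaturation gives $j_G(\kappa)=(\kappa^+)^V$ and well-foundedness up through $(\kappa^+)^V+1$, so in $V[G]$ the class $[f]<j_G(\kappa)$ is a genuine ordinal $\alpha<(\kappa^+)^V$. Since $[c_\alpha]=\alpha=[f]$, the set $\{\xi\in A: f(\xi)=c_\alpha(\xi)\}$ lies in the generic filter, yielding a positive $B\subseteq A$ on which $f\leq c_\alpha$.

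For (2) $\Rightarrow$ (1) the key step is to upgrade (2) to: every $f:\kappa\to\kappa$ is \emph{equal} modulo $I$ to some canonical function on a positive subset of any prescribed positive $A$. Let $\alpha^*<\kappa^+$ be minimal for which some positive $B^*\subseteq A$ satisfies $f\leq c_{\alpha^*}$ on $B^*$. If $\{\xi\in B^*:f(\xi)<c_{\alpha^*}(\xi)\}$ is positive, then writing $c_{\alpha^*}(\xi)=\ot(\sigma_{\alpha^*}[\xi])$, assign to each such $\xi$ the least $\eta(\xi)<\xi$ with $\sigma_{\alpha^*}(\eta(\xi))$ equal to the $f(\xi)$-th element of $\sigma_{\alpha^*}[\xi]$ in ordinal order. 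This $\eta$ is regressive, so Fodor (normality of $I$) yields a positive set on which $\eta$ is constant at some $\eta^*$, whence $\beta^*:=\sigma_{\alpha^*}(\eta^*)<\alpha^*$ satisfies $f(\xi)=\ot(\sigma_{\alpha^*}[\xi]\cap\beta^*)$ on that set. Up to a club-sized correction, $\xi\mapsto\ot(\sigma_{\alpha^*}[\xi]\cap\beta^*)$ \emph{is} a canonical function for $\beta^*$ (built by modifying $\sigma_{\alpha^*}$ to send everything above $\beta^*$ to a default point), so it agrees with $c_{\beta^*}$ modulo $\ns$, and hence modulo $I$ since $\ns\subseteq I$ by normality. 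Thus $f\leq c_{\beta^*}$ on a positive set with $\beta^*<\alpha^*$, contradicting minimality. So the other alternative $\{f=c_{\alpha^*}\}\cap B^*$ positive must hold.

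With the strengthening, for each $f:\kappa\to\kappa$ the set of conditions $B$ forcing $[f]$ to equal some specific real ordinal $<\kappa^+$ is dense; so in any generic extension every such $[f]$ is a real ordinal of $V$ below $(\kappa^+)^V$. This forces $j_G(\kappa)\leq(\kappa^+)^V$, and the reverse inequality is immediate from $[c_\alpha]=\alpha<j_G(\kappa)$ for all $\alpha<(\kappa^+)^V$. The well-founded part of $V^\kappa/G$ therefore contains every ordinal up through $(\kappa^+)^V=j_G(\kappa)$, establishing (1). The main obstacle is the strengthening: one must identify $\xi\mapsto\ot(\sigma_{\alpha^*}[\xi]\cap\beta^*)$ with a canonical function for $\beta^*$ modulo the club filter, which is the only place the specific combinatorics of canonical functions enters; the remaining steps are bookkeeping about the forcing relation for generic ultrapowers.
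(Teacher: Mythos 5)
Your proof is correct, and it fills in exactly the routine ``translation into the forcing relation'' that the paper alludes to without writing out: the forward direction reads off $[f]=[c_\alpha]$ from well-foundedness at $(\kappa^+)^V$, and the converse uses the standard pressing-down argument showing that anything below a canonical function is (mod $I$) again a canonical function, so that the ordinals of the generic ultrapower below $j_G(\kappa)$ are precisely the $[c_\alpha]$ for $\alpha<(\kappa^+)^V$. The paper omits the proof entirely, so there is nothing further to compare; your argument is the intended one.
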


Claverie and Schindler \cite{MR2963017} showed that a strong ideal on $\omega_1$ is equiconsistent with a Woodin cardinal, but it follows from a result of Silver and Lemma \ref{changbound} below that the consistency strength of a weakly presaturated ideal on $\omega_1$ is much lower.

The principle $(\kappa^{++},\kappa^+) \chang (\kappa^+,\kappa)$, a generalized version of Chang's Conjecture, asserts that every structure $\frak A$ in a countable language on $\kappa^{++}$ contains a substructure $\frak B$ such that $|\frak B| = \kappa^+$ and $|\frak B \cap \kappa^+ | = \kappa$.  This is equivalent to saying that the collection of $z \subseteq \kappa^{++}$ of order-type $\kappa^+$, or $[\kappa^{++}]^{\kappa^+}$, is stationary in $\p(\kappa^{++})$.  An important fact is that if $(\kappa^{++},\kappa^+)\chang(\kappa^+,\kappa)$, then the set of $z \in [\kappa^{++}]^{\kappa^+}$ such that $z \cap \kappa^+$ is an ordinal is also stationary.  If we force below this set in $\p(\kappa^{++})/\ns$, then the critical point of the embedding will be $\kappa^+$.  

We will use the following result from \cite{MR1359154}:

\begin{theorem}[Foreman-Magidor]
\label{changcof}
For any infinite cardinal $\kappa$, the set $\{ z \in [\kappa^{++}]^{\kappa^+} : \cf(z \cap \kappa^+) \not= \cf(\kappa) \}$ is nonstationary.
\end{theorem}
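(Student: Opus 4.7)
The plan is to exhibit a club $C \subseteq \p(\kappa^{++})$ such that every $z \in C \cap [\kappa^{++}]^{\kappa^+}$ satisfies $\cf(z \cap \kappa^+) = \cf(\kappa)$. The strategy uses internally approachable elementary substructures of $H_\theta$ for sufficiently large regular $\theta$, reducing the claim at level $\kappa^+$ to the analogous reflection fact at level $\kappa$. First I would invoke that standard fact: the collection of $N \prec H_\theta$ with $|N| = \kappa$, $\kappa \in N$, and $N \cap \kappa^+$ an ordinal of cofinality $\cf(\kappa)$, contains a club in $[H_\theta]^\kappa$. This is witnessed by taking $N$ to be the union of a continuous strictly increasing chain $\la N_\zeta : \zeta < \cf(\kappa)\ra$ of sets of size less than $\kappa$ with $\la N_{\zeta'} : \zeta' \leq \zeta\ra \in N_{\zeta+1}$: the chain itself gives $\cf(N \cap \kappa^+) \leq \cf(\kappa)$, and appropriate closure of $N$ under short sequences yields the reverse inequality.

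Next I would lift this one level up. Let $C$ consist of those $z = M \cap \kappa^{++}$ where $M \prec H_\theta$ is internally approachable of length $\kappa^+$ via $\la M_\xi : \xi < \kappa^+\ra$, with $|M_\xi| = \kappa$, $\la M_\eta : \eta \leq \xi\ra \in M_{\xi+1}$, $\kappa^+ \in M_0$, and each $M_\xi$ lying in the level-$\kappa$ club above. By standard Skolem hull arguments, $C$ contains a club in $\p(\kappa^{++})$. Now suppose $z \in C$ with $\ot(z) = \kappa^+$. Since $\kappa^+ \in M$, there is an element of $z$ at least $\kappa^+$, so $\ot(z \cap \kappa^+) < \kappa^+$ and hence $|z \cap \kappa^+| \leq \kappa$. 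For each $\alpha \in z \cap \kappa^+$ let $\xi_\alpha$ be minimal with $\alpha \in M_{\xi_\alpha}$, and set $\xi^* := \sup\{\xi_\alpha : \alpha \in z \cap \kappa^+\}$; then $\xi^*$ has cofinality at most $\kappa$, so $\xi^* < \kappa^+$. By continuity of the chain (if $\xi^*$ is a limit not attained) or by direct inclusion, $M_{\xi^*} \supseteq z \cap \kappa^+$, whence $M_{\xi^*} \cap \kappa^+ = z \cap \kappa^+$. Since $M_{\xi^*}$ lies in the level-$\kappa$ club, $\cf(z \cap \kappa^+) = \cf(M_{\xi^*} \cap \kappa^+) = \cf(\kappa)$.

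The main obstacle I anticipate is verifying the lower bound $\cf(N \cap \kappa^+) \geq \cf(\kappa)$ in the level-$\kappa$ reflection fact; this requires $N$ to be sufficiently closed under ${<}\cf(\kappa)$-sequences so that, were $N \cap \kappa^+$ to have strictly smaller cofinality in $V$, a short cofinal sequence could be absorbed into $N$ contradicting elementarity. Arranging this closure is routine when building the approach sequence, so the overall argument amounts to a two-tiered reflection, first at $\kappa$ and then at $\kappa^+$.
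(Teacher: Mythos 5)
The paper does not actually prove this statement --- it is quoted from Foreman--Magidor --- so there is no in-paper argument to compare against, and I have to judge your proposal on its own terms. It has a fatal defect: the set $C$ you build is disjoint from $[\kappa^{++}]^{\kappa^+}$, so your verification is vacuous. Indeed, suppose $M=\bigcup_{\xi<\kappa^+}M_\xi$ where $\la M_\xi:\xi<\kappa^+\ra$ is continuous, $|M_\xi|=\kappa$, $\kappa^+\in M_0$, and $\la M_\eta:\eta\leq\xi\ra\in M_{\xi+1}$. Then $\sup(M_\xi\cap\kappa^+)$ is definable in $M_{\xi+1}$ from $M_\xi$ and $\kappa^+$, and it lies below $\kappa^+$ because $|M_\xi|=\kappa$; hence it is an element of $M_{\xi+1}\cap\kappa^+$, and the ordinals $\sup(M_\xi\cap\kappa^+)$ are strictly increasing along a chain of length $\kappa^+$. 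Therefore $M\cap\kappa^+$ is unbounded in $\kappa^+$, so $\ot(M\cap\kappa^{++})\geq\kappa^++1$ and no trace $z=M\cap\kappa^{++}$ of such an $M$ has order type $\kappa^+$. Your own intermediate deduction that $|z\cap\kappa^+|\leq\kappa$ already contradicts $z\in C$. Since the theorem has content precisely when $[\kappa^{++}]^{\kappa^+}$ is stationary (the paper's formulation of $(\kappa^{++},\kappa^+)\chang(\kappa^+,\kappa)$), and a club must then meet $[\kappa^{++}]^{\kappa^+}$, the set $C$ provably fails to contain a club in every situation where there is something to prove.

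Behind this lies a more basic methodological error: exhibiting (even stationarily many) ``good'' structures does not show the bad set is nonstationary; you need \emph{every} $z$ of order type $\kappa^+$ in some fixed club to be good. Internally approachable structures, and more basically structures $N$ of size $\kappa$ with $N\cap\kappa^+$ an ordinal (equivalently $\kappa\subseteq N$), form only a stationary subset of $[H_\theta]^\kappa$ for uncountable $\kappa$ --- your ``standard fact'' that they contain a club is false, and whether club-many $z$ even satisfy $\kappa\subseteq z$ is itself a Chang-type statement independent of ZFC. A correct proof must take an arbitrary closure point $z$ of order type $\kappa^+$ of a single Skolemized structure, pass to $M=\mathrm{Hull}(z)$, and extract $\cf(\sup(M\cap\kappa^+))=\cf(\kappa)$ directly from the tension between $|M\cap\kappa^{++}|=\kappa^+$ and $|M\cap\kappa^+|\leq\kappa$ using the surjections $\kappa^+\to\beta$ and $\kappa\to\alpha$ available inside $M$; no approachability hypothesis on $M$ is available, and none is needed.
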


A connection between Chang's Conjecture and saturation properties of ideals is given by the following:

\begin{lemma}
\label{changbound}
Suppose $(\kappa^{++},\kappa^+) \chang (\kappa^+,\kappa)$.  Then there is a weakly presaturated ideal on $\kappa^+$ concentrating on $\{\alpha : \cf(\alpha) = \cf(\kappa)\}$.
\end{lemma}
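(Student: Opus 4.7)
The plan is to define the natural ideal on $\kappa^+$ projected from the nonstationary ideal on $Z := [\kappa^{++}]^{\kappa^+}$, and to verify it satisfies the weak-presaturation criterion of the Proposition above. By Chang's Conjecture together with Theorem \ref{changcof}, the set
\[
S' := \{ z \in Z : z \cap \kappa^+ \in \kappa^+ \text{ and } \cf(z \cap \kappa^+) = \cf(\kappa) \}
\]
is stationary in $\p(\kappa^{++})$. Put $J := \ns_Z \restriction S'$, let $s(z) = \sup(z \cap \kappa^+)$, and define
\[
I := \{ A \subseteq \kappa^+ : s^{-1}(A) \in J \}.
\]
Concentration of $I$ on $\{ \alpha : \cf(\alpha) = \cf(\kappa) \}$ is immediate from the definition of $S'$; normality may be checked either formally, by recognizing $I$ as the normal ideal derived from any $J$-generic ultrapower $j_G : V \to M$ via the seed $[s]_G = \sup(j_G[\kappa^+])$, or directly, by padding any sequence $\langle A_\beta : \beta < \kappa^+ \rangle$ from $I$ with empty sets at indices in $[\kappa^+, \kappa^{++})$ and invoking the normality of $\ns_Z$.

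The crux is the identity $j_G(\kappa^+) = \kappa^{++}$ in any $J$-generic ultrapower. Every $z \in Z$ has order-type exactly $\kappa^+$, so the constant function $z \mapsto \kappa^+$ coincides on $Z$ with $z \mapsto \ot(z)$; taking the identity as the defining surjection $\kappa^{++} \twoheadrightarrow \kappa^{++}$, the latter function is a representative of the canonical function on $Z$ for $\kappa^{++}$, yielding
\[
j_G(\kappa^+) = [z \mapsto \kappa^+]_G = [z \mapsto \ot(z)]_G = \kappa^{++}.
\]

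To verify the criterion, let $A \in I^+$ and $f : \kappa^+ \to \kappa^+$, and set $A^* := s^{-1}(A) \in J^+$. Since $f \circ s : Z \to \kappa^+$, \los' theorem gives $[f \circ s]_G < j_G(\kappa^+) = \kappa^{++}$, so well-foundedness of the $J$-ultrapower below $\kappa^{+++}$ furnishes $\gamma < \kappa^{++}$ and a $J$-positive $B^* \subseteq A^*$ on which $f(s(z)) \leq H_\gamma(z)$, where $H_\gamma$ is the canonical function on $Z$ for $\gamma$. Fixing a surjection $\sigma_\gamma : \kappa^+ \twoheadrightarrow \gamma$ used in defining $h_\gamma$ on $\kappa^+$ and extending it to a surjection $\kappa^{++} \twoheadrightarrow \gamma$ used in defining $H_\gamma$ on $Z$ (say by sending the tail to $0$), we obtain $H_\gamma(z) = h_\gamma(s(z))$ on a club in $Z$. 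Projecting through $s$, the set $\{ \alpha \in A : f(\alpha) \leq h_\gamma(\alpha) \}$ is then an $I$-positive subset of $A$ on which $f \leq h_\gamma$, verifying the criterion.

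The main obstacle is the identity $j_G(\kappa^+) = \kappa^{++}$, which is what ties Chang's Conjecture on $Z$ to the weak presaturation of $I$ on $\kappa^+$; once it is in hand, the rest is routine bookkeeping via the projection $s$.
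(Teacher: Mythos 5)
Your proposal is correct and follows essentially the same route as the paper: project $\ns$ restricted to the set of $z \in [\kappa^{++}]^{\kappa^+}$ with $z \cap \kappa^+ \in \kappa^+$ down to $\kappa^+$ via $z \mapsto z \cap \kappa^+$, verify criterion (2) of the Proposition by matching canonical functions on $Z$ with canonical functions on $\kappa^+$, and invoke Theorem \ref{changcof} for the concentration on $\cof(\cf(\kappa))$. The only (inessential) difference is that the paper establishes the domination step directly by a Fodor-style argument on $Z$ (choosing $\alpha_z \in z$ with $f(z\cap\kappa^+) < \ot(z\cap\alpha_z)$ and stabilizing), whereas you derive it from the generic-ultrapower facts recorded in the preliminaries (well-foundedness up to $|X|^+$ and $[z\mapsto\ot(z)]_G = \kappa^{++}$), which amounts to the same computation.
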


\begin{proof}
Let $I$ be the nonstationary ideal on $\p(\kappa^{++})$ restricted to $Z = \{ z \subseteq \kappa^{++} : z \cap \kappa^+ \in \kappa^+ \wedge \ot(z) = \kappa^+ \}$.  Let $\pi : Z \to \kappa^+$ be defined by $\pi(z) = z \cap \kappa^+$.  Let $J$ be the collection of $A \subseteq \kappa^+$ such that $\pi^{-1}[A] \in I$.  It is easy to check that $J$ is a normal ideal on $\kappa^+$.  Note that if $A \in I^+$, then $\pi[A] \in J^+$.

Suppose $f : \kappa^+ \to \kappa^+$ and $A \in J^+$.  %Let $\bar f : Z \to \kappa^+$ be defined by $\bar f(z) = f(z\cap\kappa^+)$, and 
Let $\bar A = \pi^{-1}[A]$.  For each $z \in \bar A$, there is $\alpha_z \in z$ such that $f(z \cap \kappa^+) <\ot(z\cap \alpha_z)$.  By normality, there is $\alpha<\kappa^{++}$ and $\bar B \in I^+$ such that $\alpha_z = \alpha$ for all $z \in \bar B$.  If $\sigma_\alpha : \kappa^+ \to \alpha$ is a surjection, then for $I$-almost-all $z \in \bar B$, $\sigma_\alpha \restriction z$ is a surjection from $z \cap \kappa^+$ to $z \cap \alpha$.  We may assume this holds for all $z \in \bar B$.  

Thus for all $z \in \bar B$, $f(z\cap \kappa^+) < \ot(\sigma_\alpha[z \cap \kappa^+])$.  Let $B = \pi[\bar B]$.  Then $B$ is a $J$-positive subset of $A$, and $f$ is bounded by a canonical function on $B$.  By Theorem \ref{changcof}, $\{ \beta < \kappa^+ : \cf(\beta) \not=\cf(\kappa) \} \in J$.
\end{proof}

Silver showed that $(\omega_2,\omega_1)\chang(\omega_1,\omega)$ can be forced from an $\omega_1$-Erd\H{o}s cardinal (see \cite{MR520190}), and Donder \cite{MR645907} showed that $(\omega_2,\omega_1)\chang(\omega_1,\omega)$ implies that there is an $\omega_1$-Erd\H{o}s cardinal in an inner model.  The existence of a weakly presaturated ideal on $\kappa$ clearly implies that there is no single $f : \kappa \to \kappa$ that dominates all canonical functions modulo $\ns_{\kappa}$.  Donder and Koepke \cite{MR730856} showed that for $\kappa = \omega_1$, this statement is equiconsistent with an almost ${<}\omega_1$-Erd\H{o}s cardinal, which implies the existence of $0^\sharp$.

\subsection{Trees and weak square}
A \emph{tree} is a partial order that is well-ordered below any element.
For an infinite cardinal $\kappa$, a \emph{$\kappa$-tree} is a tree of height $\kappa$ with levels of size $<\kappa$.  We say that $\kappa$ has the \emph{tree property} if every $\kappa$-tree has a cofinal branch.  A $\kappa^+$-tree is called \emph{special} if there is a function $f : T \to \kappa$ such that $x<y$ implies $f(x) \not= f(y)$.  Clearly, special $\kappa^+$-trees cannot have cofinal branches, since a branch would witness that $\kappa^+$ is not a cardinal.  Jensen \cite{MR309729} showed the existence of a special $\kappa^+$-tree is equivalent to the weak square principle $\square^*_\kappa$, which states that there is a sequence $\la \mathcal C_\alpha : \alpha < \kappa^+ \ra$ such that:
\begin{enumerate}
\item Each $\mathcal C_\alpha$ is a nonempty set of size $\leq \kappa$ consisting of closed unbounded subsets of $\alpha$, each of order-type $\leq \kappa$.
\item If $\alpha<\kappa^+$, $D \in \mathcal C_\alpha$, and $\beta <\alpha$ is a limit point of $D$, then $D \cap \beta \in \mathcal C_\beta$.
\end{enumerate}

Mitchell \cite{MR313057} showed that having the tree property at the successor of a regular cardinal is equiconsistent with a weakly compact cardinal, and the failure of $\square^*_\kappa$ for a regular $\kappa$ is equiconsistent with a Mahlo cardinal.  Starting with regular cardinals $\mu<\kappa$ such that $2^{<\mu} = \mu$ and a weakly compact $\lambda>\kappa$, Mitchell constructed a forcing extension preserving $\mu$ and $\kappa$ and in which $\lambda = \kappa^+$, $2^\mu = \lambda$, and every $\lambda$-tree has a cofinal branch.  Starting from a Mahlo $\lambda>\kappa$, the same forcing works to produce a model in which there are no special $\kappa^+$-trees.

A strictly weaker principle than $\square^*_\kappa$ is the approachability property at $\kappa^+$.  For a regular cardinal $\kappa$, we say a set $S \subseteq \kappa$ is \emph{approachable} if there is a sequence $\la a_\alpha : \alpha < \kappa \ra$ such that for a club $C \subseteq \kappa$ and all $\alpha \in S \cap C$, there is an unbounded $A \subseteq \alpha$ of order-type $\cf(\alpha)$ such that each initial segment of $A$ is in $\{ a_\beta : \beta<\alpha \}$.  The collection of approachable subsets of $\kappa$ generates a possibly non-proper normal ideal denoted by $I[\kappa]$.  Shelah \cite{MR1318912} showed that if $\kappa$ is regular, then $\kappa^+ \cap \cof({<}\kappa) \in I[\kappa^+]$, where $\cof({<}\kappa)$ denotes the class of ordinals of cofinality $<\kappa$.

The weak square principle $\square^*_\kappa$ is absolute to any outer model with the same $\kappa^+$, including outer models in which $\kappa$ is not a cardinal.  This is because it does not matter how we bound the order-types:

\begin{lemma}
\label{otreduct}
Suppose $\xi < \kappa^+$ and $\la \mathcal C_\alpha : \alpha < \kappa^+\ra$ is a sequence such that:
\begin{enumerate}
\item Each $\mathcal C_\alpha$ is a set of $\leq \kappa$ clubs in $\alpha$, each of order-type $\leq \xi$.
\item If $\alpha<\kappa^+$, $D \in \mathcal C_\alpha$, and $\beta \in \alpha \cap\lim D$, then $D \cap \beta \in \mathcal C_\beta$.
\end{enumerate}
Then $\square^*_\kappa$ holds.
\end{lemma}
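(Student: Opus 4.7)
The plan is to ``shrink'' each club in the given sequence to have order-type at most $\kappa$, coherently, by pulling back a single fixed coherent scale on the ordinal $\xi$ itself through the enumeration functions.

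The first step is to fix, once and for all, a coherent sequence $\langle e_\eta : \eta\le\xi,\ \eta\text{ a limit}\rangle$ where each $e_\eta\subseteq\eta$ is club with $\ot(e_\eta)\le\kappa$ and, whenever $\beta<\eta$ is a limit point of $e_\eta$, $e_\eta\cap\beta=e_\beta$. Existence of such a scale on $\xi$ is a standard transfinite-recursion argument that succeeds because $|\xi|\le\kappa$ keeps every inductive task small: at cofinality $\omega$ one can thread through successor points of the enumeration to avoid any coherence constraints below, and at uncountable cofinality one splices the previously constructed tails along a continuous cofinal sequence chosen compatibly with the inductive hypothesis. This is the step whose routine details need the most care.

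The second step is to use the scale to rebuild the $\mathcal{C}_\alpha$'s. For each limit $\alpha<\kappa^+$ and each $D\in\mathcal{C}_\alpha$, let $f_D\colon\ot(D)\to D$ be the monotone enumeration and set
\[D^*:=f_D\bigl[e_{\ot(D)}\bigr].\]
Because $\alpha$ is a limit, $\ot(D)$ is a limit ordinal $\le\xi$, so $D^*$ is a club in $\alpha$ of order-type $\ot(e_{\ot(D)})\le\kappa$. Define $\mathcal{C}'_\alpha:=\{D^*:D\in\mathcal{C}_\alpha\}$, with trivial clauses at successor $\alpha$ if needed.

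Finally, I would verify coherence. If $\beta<\alpha$ is a limit point of $D^*$, write $\beta=f_D(\gamma)$; then $\gamma$ is a limit point of $e_{\ot(D)}$ strictly below $\ot(D)$, so $e_{\ot(D)}\cap\gamma=e_\gamma$ by coherence of the scale, while $D\cap\beta\in\mathcal{C}_\beta$ with monotone enumeration $f_D\upharpoonright\gamma$ by coherence of $\mathcal{C}$. Combining,
\[D^*\cap\beta\ =\ f_D\bigl[e_{\ot(D)}\cap\gamma\bigr]\ =\ f_D[e_\gamma]\ =\ (D\cap\beta)^*\ \in\ \mathcal{C}'_\beta,\]
which is exactly what $\square^*_\kappa$ requires. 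The main obstacle is producing the coherent scale in Step 1; once it is in hand, the rest is a direct diagram chase.
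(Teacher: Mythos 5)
Your proof is correct and is essentially the paper's own argument: the paper likewise fixes a coherent sequence $\la E_\eta : \eta \leq \xi\ra$ of clubs of order-type $\leq\kappa$ (built by the same routine induction, using $|\xi|\leq\kappa$) and replaces each $D$ by $\{\beta\in D : \ot(D\cap\beta)\in E_{\ot(D)}\}$, which is exactly your $f_D[e_{\ot(D)}]$, with the same coherence check. No differences worth noting.
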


\begin{proof}
	It is easy to show by induction that for each $\delta<\kappa^+$, there is a sequence $\la E_\alpha : \alpha < \delta \ra$ such that:
	\begin{enumerate}
	\item Each $E_\alpha$ is a club in $\alpha$ of order-type $\leq \kappa$.
	\item If $\alpha<\delta$ and $\beta \in \alpha \cap\lim E_\alpha$, then $E_\beta = E_\alpha \cap \beta$.
	\end{enumerate}
	Fix such a sequence $\la E_\alpha : \alpha \leq \xi \ra$.  For each $\alpha<\kappa^+$, and $D \in \mathcal C_\alpha$ define
	$$D' = \{ \beta \in D : \ot(D \cap \beta) \in E_{\ot(D)} \}.$$
	Let	$\mathcal C'_\alpha = \{ D' : D \in \mathcal C_\alpha \}$.  Clearly, for each $\alpha<\kappa^+$, $|\mathcal C'_\alpha| \leq | \mathcal C_\alpha |\leq \kappa$ and if $D \in \mathcal C_\alpha$, then $\ot(D') \leq \ot(E_{\ot(D)}) \leq \kappa$, and $D'$ is a club in $\alpha$.
	
	To show coherence, suppose $D \in \mathcal C_\alpha$ and $\beta \in \alpha \cap \lim D'$.  Then $\beta \in \lim D$, so $D \cap \beta \in \mathcal C_\beta$.  Furthermore, $\ot(D \cap \beta) \in \lim E_{\ot(D)}$, so $E_{\ot(D \cap \beta)} = E_{\ot(D)} \cap \ot(D \cap \beta)$.  $(D \cap \beta)' \in \mathcal C'_\beta$, and $(D \cap \beta)' = \{ \gamma \in D \cap \beta : \ot(D \cap \gamma) \in E_{\ot(D)} \} = D' \cap \beta$.
\end{proof}

\section{Weak square from strong ideals}
\label{wssi}

In this section, we deduce $\square^*_\kappa$ from strong hypotheses about ideals plus cardinal arithmetic assumptions that are compatible with the tree property at $\kappa^+$.  We start with some deductions from $(\kappa^{++},\kappa^+)\chang(\kappa^+,\kappa)$ and related principles that are easier and serve to illustrate the main idea.  While these are surpassed by Theorem \ref{wps} in the case where $\kappa$ is regular, the arguments from Chang's Conjecture also work in the case of singular $\kappa$.

\begin{proposition}
\label{changsquare1}
Suppose $2^\kappa = \kappa^+$ and either $(\kappa^{++},\kappa^+) \chang (\kappa^+,\kappa)$ holds, or there is a weakly presaturated ideal on $\kappa^+$ with the property that $\p(\kappa^+)^V$ is forced to be a member of the generic ultrapower.
 %$\Vdash_{\p(\kappa^+)/I} \p(\kappa^+)^V \in V^{\kappa^+}/G$.  
 Then $\square^*_\kappa$ holds.
\end{proposition}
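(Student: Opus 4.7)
The plan is to treat both alternatives through a common setup: from the hypothesis one obtains a generic elementary embedding $j \colon V \to M$ with $\crit(j) = \kappa^+$ and $j(\kappa^+) = (\kappa^{++})^V$, together with substantial access inside $M$ to $\p(\kappa^+)^V$. In case (b) this access is explicit. In case (a) I would invoke Lemma \ref{changbound} to produce a weakly presaturated ideal on $\kappa^+$ (which supplies $j(\kappa^+) = (\kappa^{++})^V$), while also retaining the original ultrapower by $\ns \restriction Z$ on $Z \subseteq \p(\kappa^{++})$; the latter places $j[\kappa^{++}] \in M$ with $M$-order-type $(\kappa^+)^M$, and this plays the role of the $\p(\kappa^+)^V \in M$ assumption.

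Using $2^\kappa = \kappa^+$, I would fix in $V$ an enumeration $\vec B = \la B_\alpha : \alpha < \kappa^+\ra$ of $[\kappa^+]^{\leq \kappa}$, and pull it through $j$ to obtain $j(\vec B) = \la B'_\alpha : \alpha < (\kappa^+)^M\ra$ enumerating $[(\kappa^+)^M]^{\leq \kappa}$ in $M$, with $B'_\alpha = B_\alpha$ for $\alpha < (\kappa^+)^V$. By Lemma \ref{otreduct}, to derive $\square^*_\kappa$ it suffices to produce a coherent sequence whose clubs have order-type bounded by some fixed $\xi < \kappa^+$, which frees me from matching the order-type bound exactly to $\kappa$.

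The core construction is as follows: for each limit $\alpha < \kappa^+$, let $\mathcal C_\alpha$ consist of those $V$-clubs $C$ in $\alpha$ of order-type $\leq \kappa$ that are ``certified'' by the embedding—specifically, $C = B_\beta \in \mathcal C_\alpha$ iff in $M$ there exists $\delta < (\kappa^+)^M$ such that $B'_\delta$ is a closed unbounded subset of some $\gamma > \alpha$ of $M$-order-type $\leq \kappa$ with $\alpha \in \lim B'_\delta$ and $B'_\delta \cap \alpha = B_\beta$. Using $\p(\kappa^+)^V \in M$ in case (b), or $j[\kappa^{++}] \in M$ in case (a), this existential over $M$ can be rephrased as a statement about the ideal's forcing relation, hence as a $V$-definable statement about $\vec B$; so the whole sequence $\la \mathcal C_\alpha : \alpha < \kappa^+\ra$ lies in $V$.

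The main obstacle will be simultaneously securing the three properties required by Lemma \ref{otreduct}. Nonemptiness is easy: $(\kappa^+)^V$ has $M$-cardinality $\kappa$, so each $\alpha < (\kappa^+)^V$ admits abundant $M$-clubs of $M$-order-type $\leq \kappa$, and the access to $V$-subsets converts these into suitable $V$-certified clubs. Coherence follows by restricting an $M$-certification for $D$ at $\alpha$ down to $\beta \in \alpha \cap \lim D$: the restricted object is itself a certification for $D \cap \beta$ at $\beta$. The delicate step is cardinality: there can be up to $\kappa^+$ many $V$-clubs of order-type $\leq \kappa$ in a given $\alpha$, so the certification must be tightened—perhaps by requiring the witness $\delta$ to be $<_M$-least of a canonical form, or by restricting attention to witnesses arising from a single fixed sequence derived from $\vec B$—in a way that both cuts $|\mathcal C_\alpha|$ down to $\leq \kappa$ and is preserved under the restriction operation $D \mapsto D \cap \beta$, so that coherence is not broken by the selection.
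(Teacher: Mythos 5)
Your construction attempts to build the $\square^*_\kappa$-sequence directly at $\kappa^+$ in $V$, with membership in $\mathcal C_\alpha$ determined by a ``certification'' computed in the generic ultrapower $M$. The problem is that this certification is essentially vacuous for the objects involved: every $\alpha<\kappa^+$, every $V$-club $C\subseteq\alpha$ of order-type $<\kappa$, and every extension of $C$ to a club in some $\gamma$ with $\alpha<\gamma<\kappa^+$ lies in $H_{\kappa^+}^V$, hence is fixed by $j$ (as $\crit(j)=\kappa^+$) and therefore appears in $j(\vec B)$. For instance, if $\ot(C)<\kappa$ then $D=C\cup\{\alpha+n:n<\omega\}$ is a $V$-club in $\alpha+\omega$ of order-type $<\kappa$ with $\alpha\in\lim D$ and $D\cap\alpha=C$, and $D=j(D)$ is some $B'_\delta$; so every $V$-club of order-type $<\kappa$ in $\alpha$ gets certified, and $|\mathcal C_\alpha|$ can be $\kappa^+$ (the hypothesis is only $2^\kappa=\kappa^+$; nothing bounds $\kappa^{\aleph_0}$ or $2^{<\kappa}$ by $\kappa$). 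You flag the cardinality bound as ``the delicate step,'' but the proposed repairs are only gestures, and no selection rule can help here, because the embedding carries no information about $H_{\kappa^+}^V$ beyond what $V$ already has; if such a direct construction worked, it would essentially prove $\square^*_\kappa$ without any embedding at all.

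The missing idea is reflection. The paper builds the coherent sequence not at $\kappa^+$ but at $j(\kappa^+)=(\kappa^{++})^V$, inside $M$, using a transitive structure $N\in V\cap M$ of height $\kappa^{++}$ whose largest cardinal is $(\kappa^+)^V$ (either $H_{\kappa^{++}}^V$, available from $\p(\kappa^+)^V\in M$ in case (b), or an elementary substructure of it coded by a subset of $\kappa^{++}$, available from $j[\kappa^{++}]\in M$ in case (a)). For each $\alpha<j(\kappa^+)$ one takes the clubs in $\alpha$ of order-type $\leq(\kappa^+)^V$ lying in $N$; since $N\models 2^\kappa=\kappa^+$ there are at most $(\kappa^+)^V$ many of them, and --- this is where the embedding does real work --- $M$ sees $(\kappa^+)^V$ as an ordinal of cardinality $\kappa$, so each level has size $\leq\kappa$ in $M$. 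Lemma \ref{otreduct}, applied in $M$ with $\xi=(\kappa^+)^V<j(\kappa^+)$, then gives $M\models\square^*_\kappa$, and since $j(\kappa)=\kappa$, elementarity of $j$ pulls this down to $V$. Your proposal never uses the collapse of $(\kappa^+)^V$ inside $M$ or the final elementarity step, and these are exactly the two ingredients that make the argument go.
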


\begin{proof}
First assume that $(\kappa^{++},\kappa^+) \chang (\kappa^+,\kappa)$ holds.  Let $N \prec H_{\kappa^{++}}$ contain $\kappa^{++}$ and have size $\kappa^{++}$.  Let $Z = \{ z \subseteq \kappa^{++} : z \cap \kappa^+ \in \kappa^+ \wedge \ot(z) = \kappa^+ \}$.  Let $G \subseteq \p(Z)/\ns$ be generic, and let $j : V \to M$ be the ultrapower embedding.  
Since $[\id]_G = j[\kappa^{++}]$, \los' Theorem implies that $\ot(j[\kappa^{++}]) = (\kappa^{++})^V = j(\kappa^+)$.
Since $j[\kappa^{++}] \in M$ and $N$ is coded by a subset of $\kappa^{++}$, $N \in M$.

Under the second hypothesis, since in general $H_{\theta^+}$ is constructible from $\p(\theta)$, we have that $H_{\kappa^{++}}^V \in M$. Thus under both assumptions, some forcing introduces a generic elementary embedding $j : V \to M$ with critical point $\kappa^+$ such that $j(\kappa^+) = (\kappa^{++})^V$, and there is a transitive structure $N \in V \cap M$ of height $(\kappa^{++})^V$, with $\p(\kappa)^V \in N$, in which $(\kappa^+)^V$ is the largest cardinal, and such that $N \models \mathrm{ZFC} - \{ \mathrm{Powerset}\}$.
Working in $M$, we construct a $\square^*_{\kappa}$ sequence as follows.  For $\alpha<(\kappa^{++})^V$ such that $N \models \cf(\alpha) = \kappa^+$, let $D_\alpha \in N$ be a club in $\alpha$ of order-type $\kappa^+$, and let $\mathcal C_\alpha = \{ D_\alpha \}$.  For $\alpha < (\kappa^{++})^V$ such that $N \models \cf(\alpha)\leq\kappa$, let $\mathcal C_\alpha$ be the collection of all clubs in $\alpha$ of order-type $<\kappa^+$ that are members of $N$. Since $N \models 2^\kappa = \kappa^+$, and $M \models |(\kappa^+)^N| = \kappa$, we have $|\mathcal C_\alpha| \leq \kappa$ for all $\alpha$.  Since $N$ satisfies enough set theory, we have that if $D \in \mathcal C_\alpha$ and $\beta<\alpha$ is a limit point of $D$, then $D \cap \beta \in \mathcal C_\beta$.  Lemma \ref{otreduct} implies that $M \models \square^*_\kappa$. Since $\kappa<\crit(j)$, $V \models \square^*_\kappa$ by the elementarity of $j$.
\end{proof}

\begin{remark}
	It is shown in \cite{2018arXiv181211768E} that if $\cf(\kappa) = \omega$ and $(\kappa^{++},\kappa^+) \chang (\kappa^+,\kappa)$, then $\square^*_\kappa$ holds, regardless of cardinal arithmetic.
	\end{remark}

\begin{proposition}
\label{changsquare2}
Suppose $\kappa^{<\kappa} \leq \kappa^+$, $(\kappa^{++},\kappa^+) \chang (\kappa^+,\kappa)$, and the set $\{ \alpha < \kappa^+ : \exists z \in [\kappa^{++}]^{\kappa^+} (\alpha = z \cap \kappa^+ )\}$ is approachable.  Then $\square^*_\kappa$ holds.
\end{proposition}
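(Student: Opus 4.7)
The plan is to mimic the proof of Proposition \ref{changsquare1}, using the approachability hypothesis on $S$ in place of the cardinal arithmetic assumption $2^\kappa = \kappa^+$.  I would begin by applying $(\kappa^{++},\kappa^+)\chang(\kappa^+,\kappa)$ as in \ref{changsquare1} to obtain a generic elementary embedding $j : V \to M$ with $\crit(j) = \kappa^+$, $j(\kappa^+) = (\kappa^{++})^V$, and $N := H_{\kappa^{++}}^V \in M$, so that $M$ collapses $(\kappa^+)^V$ to cardinality $\kappa$.  Next, I would fix an approachability sequence $\vec a = \la a_\xi : \xi < \kappa^+\ra$ witnessing that $S$ is approachable.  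Using $\kappa^{<\kappa} \leq \kappa^+$, one can arrange that $\vec a$ enumerates all bounded subsets of $\kappa^+$ of cardinality less than $\kappa$.  By Theorem \ref{changcof}, we may also assume $S \subseteq \cof(\cf(\kappa))$, so each approachability witness $A_\alpha$ has order-type $\cf(\kappa)$.

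Working inside $M$, by elementarity $j(\vec a)$ enumerates all bounded subsets of $(\kappa^{++})^V$ of $M$-cardinality less than $\kappa$ and witnesses approachability of $j(S)$.  For each $\alpha < (\kappa^{++})^V$, I would define
\[
\mathcal C_\alpha := \{(j(\vec a)_\xi)^* : \xi < \alpha,\ j(\vec a)_\xi \text{ is cofinal in } \alpha,\ \ot^M(j(\vec a)_\xi) < \kappa\} \cup \mathcal E_\alpha,
\]
where $\mathcal E_\alpha$ contributes at most one ``large'' club---namely the closure of the $j(\vec a)$-approachability witness when $\alpha$ is approachable in $M$, or a designated club of $N$-order-type $(\kappa^+)^N$ when $N \models \cf(\alpha) = \kappa^+$.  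Since the index set of the first family lies inside $\alpha < (\kappa^+)^M$, which has $M$-cardinality at most $\kappa$, and $|\mathcal E_\alpha|\leq 1$, we obtain $|\mathcal C_\alpha|^M \leq \kappa$.  Each member of $\mathcal C_\alpha$ is a club in $\alpha$ of $M$-order-type at most $\kappa$, positioning us to apply Lemma \ref{otreduct} in $M$ with threshold $(\kappa^+)^V < j(\kappa^+)$.

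For coherence, given $D \in \mathcal C_\alpha$ and a limit point $\beta < \alpha$ of $D$, one must show $D \cap \beta \in \mathcal C_\beta$.  For $D = (j(\vec a)_\xi)^*$, one has $D \cap \beta = (j(\vec a)_\xi \cap \beta)^*$, and $j(\vec a)_\xi \cap \beta$ is a bounded $<\kappa$-sized set, hence equals $j(\vec a)_{\xi'}$ for some $\xi'$ by the enumeration property.  For $D \in \mathcal E_\alpha$ coming from approachability, the witness gives $A_\alpha \cap \beta = j(\vec a)_{\xi''}$ with $\xi'' < \alpha$.  The crux is that $\xi'$ and $\xi''$ should lie below $\beta$, not merely below $\alpha$; this must be secured, on a club of $\beta$'s, by tailoring the enumeration of $\vec a$ using the interplay between $\kappa^{<\kappa} \leq \kappa^+$ and the approachability witness.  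Once coherence is established, Lemma \ref{otreduct} gives $M \models \square^*_\kappa$, and elementarity (with $\kappa < \crit(j)$) transfers $\square^*_\kappa$ back to $V$.

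The main obstacle is this coherence verification, specifically controlling the indices $\xi'$, $\xi''$ below $\beta$ rather than merely below $\alpha$.  This is precisely the step where the full strength of both the cardinal arithmetic and the approachability of $S$ must be exploited to align the enumeration of $\vec a$ with the approachability witnesses on a sufficiently large set, and it is the most delicate part of the argument.
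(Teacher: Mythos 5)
Your proposal stalls exactly where you say it does, and the coherence problem you flag is not a technicality to be ``secured by tailoring the enumeration''---it is the whole difficulty, and your indexing scheme makes it unsolvable. Requiring $D\cap\beta = j(\vec a)_{\xi'}$ with $\xi'<\beta$ is precisely the assertion that $\beta$ is approachable with respect to $j(\vec a)$, and the hypothesis only gives approachability of the particular set $S$ (hence, after applying $j$, of $j(S)$ on a club); it gives you nothing at arbitrary limit points $\beta$ of arbitrary members of $\mathcal C_\alpha$, and coherence must hold at \emph{every} limit point, not club-many. A second, related problem: your $\mathcal E_\alpha$ for $N\models\cf(\alpha)=\kappa^+$ is a club of order-type $(\kappa^+)^V$, whose limit points $\beta$ include ordinals where $D\cap\beta$ has order-type of $V$-cofinality $\kappa$ or larger; such sets cannot land in your $\mathcal C_\beta$, which only contains sets of order-type $<\kappa$ plus at most one extra club, and you cannot enlarge $\mathcal C_\beta$ to all $V$-clubs of order-type $\leq\kappa$ because under the weaker hypothesis $\kappa^{<\kappa}\leq\kappa^+$ (rather than $2^\kappa=\kappa^+$) only $\p_\kappa(\alpha)^V$ is guaranteed to have size $\kappa$ in $M$, not $\p(\alpha)^V$ or even the collection of order-type-$\kappa$ clubs.

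The paper's proof sidesteps both issues by using approachability exactly once, at the critical point $\delta=(\kappa^+)^V$: since the generic $z$ has $z\cap\kappa^+\in S$ almost everywhere (modulo Theorem \ref{changcof}), \los\ gives a single club $E\subseteq\delta$ in $M$ of order-type $\cf(\kappa)$ all of whose initial segments lie in $V$. One then fixes in $V$ a sequence $\la b_\alpha:\alpha<\kappa^{++}\ra$ of clubs of minimal order-type (this lies in $M$ because $j[\kappa^{++}]\in M$), and for $\alpha$ of $V$-cofinality $\kappa^+$ takes the single thread $\{\gamma_\beta:\beta\in E\}$ where $\la\gamma_\beta\ra$ enumerates $b_\alpha$---a club of order-type $\cf(\kappa)\leq\kappa$ whose initial segments are all in $V$. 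For $V$-cofinality $\kappa$ one takes $\{b_\alpha\}$, and for $V$-cofinality $<\kappa$ one takes \emph{all} $V$-clubs of order-type $<\kappa$, which has size $\leq|\p_\kappa(\alpha)^V|\leq\kappa$ in $M$. With the small-cofinality levels made maximal in this way, coherence is automatic: any $D$ in the sequence has order-type $\leq\kappa$, so $D\cap\beta$ is a $V$-club in $\beta$ of order-type $<\kappa$ and lands in $\mathcal C_\beta$ by definition. No index-tracking is needed. If you want to salvage your write-up, replace the $j(\vec a)$-indexed families by this two-step scheme: one global thread $E$ at the critical point, pushed forward through the $b_\alpha$'s, with maximal small-cofinality levels.
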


\begin{proof}
In $V$, let $\la a_\alpha : \alpha < \kappa^+ \ra$ list all elements of $\p_\kappa(\kappa^+)$.   Let $Z = \{ z \subseteq \kappa^{++} : z \cap \kappa^+ \in \kappa^+ \wedge \ot(z) = \kappa^+ \}$, and let $j: V \to M$ be a generic ultrapower embedding obtained by forcing with $\p(Z)/\ns$.  By the approachability assumption and Theorem \ref{changcof}, for all but nonstationary-many $z \in Z$, there is an unbounded $A \subseteq z\cap \kappa^+$ of order-type $\cf(\kappa)$ such that all initial segments are in $\{ a_\beta : \beta < z\cap\kappa^+ \}$. By \los' Theorem, there is in M a club $E \subseteq (\kappa^+)^V$ of order-type $\cf(\kappa)$ such that every initial segment of $E$ is in $V$.

In $V$, choose a sequence $\la b_\alpha : \alpha < \kappa^{++} \ra$ such that each $b_\alpha$ is a club in $\alpha$ of order-type $\cf(\alpha)$.  Since $j[\kappa^{++}] \in M$, this sequence is in $M$, and the $V$-cofinalities of ordinals $<j(\kappa^+)$ are definable in $M$ in terms of this parameter.  For the same reasons, $\la \p_\kappa(\alpha)^V : \alpha < (\kappa^{++})^V \ra \in M$.  Note that each element of this sequence has size $\kappa$ in $M$.

Now we construct our $\square^*_\kappa$ sequence in $M$ as follows.  If $V \models \cf(\alpha) = \kappa^+$, then enumerate $b_\alpha$ in increasing order as $\la \gamma_\beta : \beta < \kappa^+ \ra$, and let $\mathcal C_\alpha = \{\{ \gamma_\beta : \beta \in E \}\}$.  Every initial segment of $\{ \gamma_\beta : \beta \in E \} $ is in $V$.  If $V \models \cf(\alpha) = \kappa$, let $\mathcal C_\alpha = \{ b_\alpha \}$.  If $V \models \cf(\alpha)<\kappa$, let $\mathcal C_\alpha$ be the collection of all club subsets of $\alpha$ from $V$ of order-type $<\kappa$.  For each $\alpha$, $M \models |\mathcal C_\alpha|\leq \kappa$.  If $D \in \mathcal C_\alpha$ and $\beta \in \alpha \cap \lim D$, then $D \cap \beta \in V$.  Since $\ot(D) \leq \kappa$, $V \models \cf(\beta)<\kappa$, and so $D \cap \beta \in \mathcal C_\beta$.  By elementarity, $V \models \square^*_\kappa$.
\end{proof}

In order to derive $\square^*_\kappa$ from weaker hypotheses about ideals on $\kappa^+$, we have to contend with the fact there is no guarantee that a given object of size $>\kappa^+$ from $V$ is a member of the generic ultrapower $M$.  The key to our arguments will be a refinement of the \emph{approximation property} introduced by Hamkins \cite{MR2063629}.  Suppose $A$ is a set, $X \subseteq A$, and $\mathcal F \subseteq \p(A)$.  We will say that \emph{$X$ is approximated by $\mathcal F$} when for all $a \in \mathcal F$, $a \cap X \in \mathcal F$.  The $\kappa$-approximation property can be stated in these terms as follows:  For models of set theory $M \subseteq N$ and an $M$-cardinal $\kappa$, the pair $(M,N)$ satisfies the $\kappa$-approximation property when for all ordinals $\lambda \in M$, if $X \in \p(\lambda)^N$ is approximated by $\p_\kappa(\lambda)^M$, then $X\in M$.  We say that a forcing $\mathbb P$ has the $\kappa$-approximation property when it forces that the pair $(V,V[G])$ has this property.

\begin{theorem}
\label{idealapprox}
	Suppose $\kappa$ is regular, $2^{<\kappa} \leq \kappa^+$, and $I$ is a normal ideal on $\kappa^+$ such that $\p(\kappa^+)/I$ is a forcing with the $\kappa$-approximation property.  Then $I$ is not weakly presaturated.
\end{theorem}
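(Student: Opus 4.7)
The plan is to argue by contradiction: assume $I$ is weakly presaturated. Let $G \subseteq \p(\kappa^+)/I$ be generic and $j\colon V \to M$ the induced ultrapower embedding. Weak presaturation ensures $\crit(j) = \kappa^+$, $j(\kappa^+) = (\kappa^{++})^V$, and $M$ is well-founded up to $(\kappa^{++})^V + 1$, which we identify with its transitive collapse.

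The first observation I would make is a general consequence of the $\kappa$-approximation property: $V[G]$ contains no ``fresh'' subset of the $V$-cardinal $(\kappa^{++})^V$. Concretely, if $X \in V[G]$ with $X \subseteq (\kappa^{++})^V$ and $X \cap \alpha \in V$ for every $\alpha < (\kappa^{++})^V$, then for every $a \in \p_\kappa((\kappa^{++})^V)^V$ the regularity of $(\kappa^{++})^V > \kappa$ in $V$ gives $a \subseteq \alpha$ for some $\alpha < (\kappa^{++})^V$; hence $X \cap a = (X \cap \alpha) \cap a \in V$. So $X$ is approximated by $\p_\kappa((\kappa^{++})^V)^V$, and the approximation property forces $X \in V$.

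The second, main step is to construct such a fresh subset inside $M$, which will then live in $V[G]$ and contradict the first step. The key structural input from weak presaturation is that $M$ regards $(\kappa^+)^V$ as having cardinality $\kappa$: indeed $(\kappa^+)^M = j(\kappa^+) = (\kappa^{++})^V$ strictly exceeds $(\kappa^+)^V$, so $M$ contains a bijection $b\colon \kappa \to (\kappa^+)^V$. The cardinal arithmetic hypothesis $2^{<\kappa} \leq \kappa^+$ implies $((\kappa^+)^V)^{<\kappa} = (\kappa^+)^V$, so in $V$ one may fix an enumeration $\la a_\eta : \eta < (\kappa^+)^V\ra$ of $\p_\kappa((\kappa^+)^V)$. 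Applying $j$ extends this to an $M$-enumeration of $\p_\kappa((\kappa^{++})^V)^M$ agreeing with $\la a_\eta\ra$ below $(\kappa^+)^V$. Combining $b$ with this extended enumeration and the canonical functions $c_\alpha$ (for $\alpha < (\kappa^{++})^V$) should produce the desired $X \subseteq (\kappa^{++})^V$ in $M$: new because it encodes the $M$-bijection $b \notin V$, yet with bounded initial segments $X \cap \alpha$ living in $V$ because the encoding is spread finely enough across $(\kappa^{++})^V$.

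The principal obstacle is this explicit construction in the second step. The challenge is to arrange the encoding so that each bounded initial segment $X \cap \alpha$ is witnessed by purely $V$-data (a specific $V$-subset of $\p_\kappa((\kappa^+)^V)$ indexed by values of $b$ relevant up to $\alpha$), while the totality of $X$ encodes all of $b$. The cardinal arithmetic $2^{<\kappa} \leq \kappa^+$ is essential here precisely because it makes the $V$-enumeration of $\p_\kappa((\kappa^+)^V)$ have length exactly $(\kappa^+)^V$, providing the indexing scheme against which $b$'s $\kappa$-many values can be dispersed throughout $(\kappa^{++})^V$ in a way that keeps each bounded piece $V$-computable.
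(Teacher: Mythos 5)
Your first step is sound: the $\kappa$-approximation property does imply that $V[G]$ (hence the well-founded part of $M$) contains no fresh subset of $(\kappa^{++})^V$, since $(\kappa^{++})^V$ is regular in $V$ and so every $a\in\p_\kappa((\kappa^{++})^V)^V$ is bounded. This is essentially the same mechanism the paper uses when it observes that generic branches are caught by the approximation property. The problem is that your second step --- actually producing a fresh subset of $(\kappa^{++})^V$ in $M$ --- is the entire content of the theorem, and you have not produced it; you explicitly flag it as ``the principal obstacle.'' Worse, the encoding you sketch is structurally suspect. Any $X$ all of whose bounded initial segments lie in $V$ is, by your own first step, already in $V$; so whatever construction you give must exploit weak presaturation in an essential way to manufacture an object that the approximation property then rules out, and ``dispersing'' the bijection $b:\kappa\to(\kappa^+)^V$ across $(\kappa^{++})^V$ against a $V$-enumeration of $\p_\kappa((\kappa^+)^V)$ provides no such mechanism: if every $X\cap\alpha$ is recoverable from $V$-data alone, nothing forces the union $X=\bigcup_\alpha (X\cap\alpha)$ to code $b$ or to escape $V$. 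The new information carried by $b$ would have to appear only ``at the top,'' and you give no reason it can.

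The paper's proof shows how much machinery is actually needed to build the impossible object. It first uses the approximation property to rule out weak $\kappa$-Kurepa trees, so that Baumgartner's theorem (with $\kappa$ regular and $2^{<\kappa}\le\kappa^+$) yields $2^\kappa=\kappa^+$. It then notes that $\p_\kappa(\kappa^+)^V\in M$ and that the approximation property makes $\p(\kappa^+)^V$ definable in $M$ as the collection of approximated subsets of $\kappa^+$, so the second hypothesis of Proposition \ref{changsquare1} applies: weak presaturation would give $\square^*_\kappa$, i.e., a special $\kappa^+$-tree in $V$. Finally, the approximation property gives the tree property at $\kappa^+$ (the branch of a $\kappa^+$-tree $T$ found below a node of $j(T)$ at level $(\kappa^+)^V$ is approximated by $\p_\kappa(T)^V$, hence lies in $V$), contradicting the existence of a special tree. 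Thus the fresh object that does the work is a cofinal branch of a special $\kappa^+$-tree --- a subset of $(\kappa^+)^V$, not of $(\kappa^{++})^V$ --- and reaching it requires Baumgartner's theorem and the square construction of Proposition \ref{changsquare1}, none of which appear in your sketch. As it stands the proposal is an unproved reduction, not a proof.
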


\begin{proof}
Recall that a weak $\kappa$-Kurepa tree is a tree of height $\kappa$, with levels of size $\leq \kappa$, and with more than $\kappa$-many cofinal branches.  If $I$ is a normal ideal on $\kappa^+$ such that $\p(\kappa^+)/I$ has the $\kappa$-approximation property, then there are no weak $\kappa$-Kurepa trees $T$, since the generic embedding would necessarily add branches to $T$, whereas any branch is approximated by $\p_\kappa(T)^V$.  Baumgartner \cite{MR401472} showed that if $\kappa$ is regular, $2^{<\kappa} \leq \kappa^+$, and there are no weak $\kappa$-Kurepa trees, then $2^\kappa=\kappa^+$.

Suppose $j : V \to M$ is a generic embedding arising from forcing with a normal ideal $I$ as above.  Since $2^{<\kappa} \leq \kappa^+$, $\p_{\kappa}(\kappa^+)^V \in M$.  By the $\kappa$-approximation property, every $X \subseteq \kappa^+$ in $M$ that is approximated by $\p_{\kappa}(\kappa^+)^V$ is in $V$.  But this means that $\p(\kappa^+)^V$ is definable in $M$ as the collection of all such $X$, since for any $X \in \p(\kappa^+)^V$, $X = j(X) \cap \kappa^+ \in M$.  Since $V \models 2^\kappa = \kappa^+$, Proposition \ref{changsquare1} implies that if $I$ is weakly presaturated, then $V \models \square^*_\kappa$. 

On the other hand, $\p(\kappa^+)/I$ having the $\kappa$-approximation property implies the tree property at $\kappa^+$, since any $\kappa^+$-tree $T$ acquires a branch in $V[G]$ by looking below a node of $j(T)$ at level $(\kappa^+)^V$.  But this branch is approximated by $\p_\kappa(T)^V$ and is thus in $V$.
\end{proof}

We note that it is consistent relative to a measurable cardinal that $2^\omega=\omega_2$ and there is a precipitous normal ideal $I$ on $\omega_2$ such that $\p(\omega_2)/I$ has the $\omega_1$-approximation property.  For example, if $\kappa$ is measurable, this is forced by (a variation of) the Mitchell forcing up to $\kappa$, countable support iteration of Sacks forcing \cite{MR556894}, and by the pure side conditions forcings of Krueger \cite{MR3647840} and Neeman \cite{MR3201836}.

\begin{theorem}\label{wps}
Suppose $\kappa$ is a regular cardinal, $2^{<\kappa} \leq \kappa^+$, and there is a weakly presaturated ideal on $\kappa^+$ concentrating on $\cof(\kappa)$.   
Then $\square^*_\kappa$ holds.
\end{theorem}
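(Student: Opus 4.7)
The plan is to adapt Proposition~\ref{changsquare2} with weak presaturation replacing Chang's Conjecture. Let $G\subseteq\p(\kappa^+)/I$ be $V$-generic and $j:V\to M$ the induced ultrapower. By the hypotheses, $\crit(j)=\kappa^+$, $j(\kappa^+)=(\kappa^{++})^V$, and $M$ is well-founded up to $(\kappa^{++})^V$; by the concentration hypothesis together with \los's theorem, $M\models\cf((\kappa^+)^V)=\kappa$. Since $j(\kappa)=\kappa$, it suffices to construct a $\square^*_\kappa$-sequence in $M$ of length $(\kappa^+)^M=(\kappa^{++})^V$ and then pull back by elementarity of $j$.

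In $V$, use $2^{<\kappa}\leq\kappa^+$ to enumerate $\p_{<\kappa}(\kappa^+)^V=\{a_\xi:\xi<\kappa^+\}$, and for each $\alpha<(\kappa^{++})^V$ with $\cf^V(\alpha)=\kappa^+$ fix a $V$-club $b_\alpha\subseteq\alpha$ of order-type $\kappa^+$ with its increasing enumeration $\la\gamma^\alpha_\beta:\beta<\kappa^+\ra$. The pivotal step is to produce, in $M$, a club $E\subseteq(\kappa^+)^V$ of order-type $\kappa$ all of whose proper initial segments $E\cap\gamma$ (for $\gamma<(\kappa^+)^V$) lie in $V$. Granted this, construct the $\square^*_\kappa$-sequence in $M$ as in Proposition~\ref{changsquare2}: for $\alpha$ with $\cf^V(\alpha)=\kappa^+$, set $\mathcal C_\alpha=\{\{\gamma^\alpha_\beta:\beta\in E\}\}$ (a single club of order-type $\kappa$ with $V$-initial segments); for $\alpha$ with smaller $V$-cofinality, take $\{b_\alpha\}$ or the collection of $V$-clubs of the appropriate order-type, using Shelah's $\cof(<\kappa)\cap\kappa^+\in I[\kappa^+]$ to handle coherence. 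At each limit point of any chosen club, the initial segment is a $V$-set of order-type $<\kappa$, so lands in the appropriate $\mathcal C_\beta$, and Lemma~\ref{otreduct} absorbs any mismatch in order-type bounds.

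The main obstacle is establishing the existence of $E$. Represent $E=[c]_G$ for some $c\in V$ with $c(\alpha)$ a $V$-club of order-type $\kappa$ in $\alpha$ for $I$-almost-all $\alpha\in\cof(\kappa)$. For $\gamma<(\kappa^+)^V$, $E\cap\gamma$ is represented by $\alpha\mapsto c(\alpha)\cap\gamma$, always an element of $\p_{<\kappa}(\gamma)^V$; defining $h_\gamma(\alpha)$ to be the index with $a_{h_\gamma(\alpha)}=c(\alpha)\cap\gamma$, we have $E\cap\gamma=a_{[h_\gamma]_G}$, which lies in $V$ precisely when $[h_\gamma]_G<(\kappa^+)^V$. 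Weak presaturation only bounds $[h_\gamma]_G$ below $(\kappa^{++})^V$; the task is to choose $c$ and the enumeration $\{a_\xi\}$ in tandem, via a bookkeeping argument, so that for each $\gamma$, $c(\alpha)\cap\gamma$ is enumerated at a position below $\alpha$ for club-many $\alpha$. Then $h_\gamma$ is regressive on a club, the club is in $G$, and normality forces $[h_\gamma]_G<(\kappa^+)^V$. The bookkeeping is routine under $\kappa^{<\kappa}=\kappa$, where one can arrange $\{a_\xi:\xi<\alpha\}\supseteq\p_{<\kappa}(\alpha)^V$ for club-many $\alpha$; under $\kappa^{<\kappa}=\kappa^+$, it is considerably more delicate and must draw on the full bounded-by-canonical-functions characterization of weak presaturation to push the indices below the diagonal.
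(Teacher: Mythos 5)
There is a genuine gap here---in fact two, and the first is the central difficulty that the paper's proof is designed to solve.

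\textbf{First and most seriously:} your construction silently assumes that various $V$-objects of size $\kappa^{++}$ are available in $M$. The sequence $\la b_\alpha : \alpha<(\kappa^{++})^V\ra$, the predicate ``$D$ is a club in $\alpha$ lying in $V$'' for $\alpha\in[\kappa^+,(\kappa^{++})^V)$, and the collections ``all $V$-clubs in $\alpha$ of order-type $<\kappa$'' for such $\alpha$ are all coded only by subsets of $\kappa^{++}$. In Proposition \ref{changsquare2} these objects are in $M$ because the ultrapower there comes from an ideal on $\p(\kappa^{++})$, so $j[\kappa^{++}]=[\id]_G\in M$; a weakly presaturated ideal on $\kappa^+$ gives nothing of the sort---the only $V$-objects one can count on in $M$ are those coded by subsets of $\kappa^+$, such as $\p_\kappa(\kappa^+)^V$ (using $2^{<\kappa}\le\kappa^+$). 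So even granting your club $E$, the sets $\mathcal C_\alpha$ for $\alpha\ge\kappa^+$ are not definable in $M$, and nothing bounds their $M$-cardinality by $\kappa$. The paper's proof overcomes exactly this by replacing ``$V$-clubs in $\alpha$'' with sets of the form $f_X[s]$, where $X$ ranges over subsets of $\kappa^+$ in $M$ that are merely \emph{approximated} by $\p_\kappa(\kappa^+)^V$ (these code prewellorderings of $\kappa^+$ of order-type $\alpha$ and need not lie in $V$) and $s$ ranges over a fixed set $Q$ of size $\kappa$; Claim \ref{surjind} (independence of the choice of $X$) is what caps $|\mathcal C_\alpha|$ at $\kappa$, and Claim \ref{surjint} gives coherence. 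Your proposal contains no substitute for this machinery.

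\textbf{Second:} your construction of $E$ is incomplete, and you say so yourself. The bookkeeping that makes $h_\gamma$ regressive requires $\p_{<\kappa}(\alpha)^V\subseteq\{a_\xi:\xi<\alpha\}$ for club-many $\alpha<\kappa^+$, which is impossible when $2^{<\kappa}=\kappa^+$ (then $\p_{<\kappa}(\alpha)^V$ has size $\kappa^+$ for every infinite $\alpha$), and that case is permitted by the hypothesis; the appeal to ``the full bounded-by-canonical-functions characterization'' is not an argument. The paper in fact shows this whole objective is unnecessary: it fixes in $M$ a club $C^*\subseteq(\kappa^+)^V$ of order-type $\kappa$ (which exists simply because the ideal concentrates on $\cof(\kappa)$) and never requires its initial segments to lie in $V$; for an initial segment $s$ of $C^*$ it only needs $g[s]\in Q$ for suitable $V$-functions $g$, which follows from $\p_\kappa(\kappa^+)^V\in M$ since $g$ restricted to a small $V$-set is itself a small $V$-set. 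So the object you work hardest to build is both unobtainable by your method in general and not actually needed.
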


\begin{proof}
We may assume that $\kappa>\omega$, since $\square^*_\omega$ always holds.  Let $\delta = (\kappa^+)^V$.  A forcing introduces an elementary embedding $j : V \to M$ with critical point $\delta$, such that $M$ is well-founded up to $(\kappa^{++})^V
+1$, $j(\delta) = (\kappa^{++})^V$, and $M \models \cf(\delta)=\kappa$.  Since $V\models\delta^{<\kappa}=\delta$, $\p_\kappa(\delta)^V \in M$.  Define in $M$ the set $\mathcal A$ of subsets of $\delta$ that are approximated by $\p_\kappa(\delta)^V$.

Fix in $M$ a club $C^* \subseteq \delta$ of order-type $\kappa$, and let $\la \xi_\alpha : \alpha < \kappa \ra$ be its increasing enumeration.  In $V$, let $\vec \sigma= \la \sigma_\alpha : \alpha < \delta \ra$ be a sequence such that $\sigma_\alpha : \kappa \to \alpha$ is a surjection, and note that $\vec \sigma \in M$.  We can write $\delta$ as the union of a continuous increasing sequence of sets of size $<\kappa$, $\la z_\alpha : \alpha<\kappa \ra$, by putting $z_\alpha = \bigcup_{\beta<\alpha}  \sigma_{\xi_\beta}[\alpha]$.  Take $N \prec H_{j(\delta)}^M$ such that $\{ C^*,\p_\kappa(\delta)^V,\vec \sigma\} \cup \delta \subseteq N$ and $M \models |N| = \kappa$.  Let $Q = \p_\kappa(\delta) \cap N$.

Recall that a prewellordering is a transitive reflexive binary relation in which every two elements are comparable, such that the quotient by the equivalence relation, $x \sim y \Leftrightarrow x\leq y \wedge y \leq x$, is a wellorder.  An ordinal $\alpha$ has cardinality $\leq \beta$ if and only if there is a prewellordering on $\beta$ whose quotient has order-type $\alpha$.  There is a natural correspondence between surjections from sets onto ordinals and prewellorderings of those sets.  For a set of ordinals $Z$ closed under the G\"odel pairing function, a set $X \subseteq Z$ codes a relation on $Z$ via this function.  If $X \subseteq Z$ codes a prewellordering whose quotient has order-type $\alpha$, let $f_X : Z \to \alpha$ be the corresponding surjection.

\begin{claim}
\label{surjind}
Suppose $X,Y \in \mathcal A$ code prewellorderings of $\delta$ of the same length.  Then
$\{ f_X[z] : z \in Q \} = \{ f_Y[z] : z \in Q \}$.
\end{claim}

\begin{proof}[Proof of claim]
Let $r \in Q$.  We need to show that there is some $s \in Q$ such that $f_X[r] = f_Y[s]$.  There is a club $C \subseteq \kappa$ such that for all $\alpha \in C$, $f_X[z_\alpha] = f_Y[z_\alpha]$.  We may assume that for all $\alpha \in C$, $z_\alpha$ is closed under G\"odel pairing.

Let $\alpha\in C$ be such that $r \subseteq z_\alpha$.  By definition, $z_\alpha \subseteq \xi_\alpha<\delta$.  Since $|z_\alpha|<\kappa$, there is $\beta<\kappa$ such that $z = \sigma_{\xi_\alpha}[\beta] \supseteq z_\alpha$.  Since $X,Y \in \mathcal A$, $X \cap z$ and $Y \cap z$ are in $V$.  Thus $X \cap z_\alpha$ and $Y \cap z_\alpha$ are in $N$.  These sets code prewellorderings of $z_\alpha$ of order-type $\eta = \ot(f_X[z_\alpha]) =   \ot(f_Y[z_\alpha])$.  

Let $h_X : z_\alpha \to \eta$ and $h_Y : z_\alpha \to \eta$ be the corresponding surjections.  Let $r' = h_X[r]$.  Note that if $\pi : \eta \to f_X[z_\alpha]$ is the unique order-preserving map, then $\pi[r'] = f_X[r]$.  Let $s = h_Y^{-1}[r']$.  Then $s \in N$, and $h_Y[s] = r'$.  Furthermore, $\pi \circ h_Y[s] = f_Y[s] = f_X[r]$, as desired.
\end{proof}

If $f$ is a function from $Z$ to an ordinal $\alpha$ and $\beta<\alpha$, let $f \downharpoonleft \beta$ be the function $g$ such that $g(\gamma) = f(\gamma)$ when $f(\gamma)<\beta$ and $g(\gamma) = 0$ otherwise.  If $R$ is a prewellordering on a set $Z$ of order-type $\alpha$, $f_R : Z \to \alpha$ is the corresponding surjection, and $\beta<\alpha$, then let $R \downharpoonleft \beta$ denote the canonical alteration of $R$ to represent $f_R \downharpoonleft \beta$, where we make $x$ equivalent to the $R$-least element of $Z$ if $f_R(x) \geq \beta$, and leave the ordering between the elements of rank $<\beta$ the same.  

\begin{claim}
\label{surjint}
If $X \in \mathcal A$ codes a prewellordering of order-type $\alpha$ and $\beta<\alpha$, then $X \downharpoonleft \beta \in \mathcal A$.  Furthermore, if $r \in Q$, then $f_X[r] \cap \beta = f_{X \downharpoonleft\beta}[s]$ for some $s \in Q$.
\end{claim} 
\begin{proof}[Proof of claim]
Suppose $y \in \p_\kappa(\delta)^V$ and $r \in Q$.  Let $\zeta_0$ be any ordinal such that $f _X(\zeta_0)=0$.  Let $\xi < \delta$ and $\beta<\kappa$ be such that $y \cup r \cup \{\zeta_0\} \subseteq \sigma_\xi[\beta]$.  Let $z$ be the closure of $\sigma_\xi[\beta]$ under G\"odel pairing, which is in $V$.  Let $h : z \to \eta$ be the surjection coded by $X \cap z$.  There is some $\xi \leq \eta$ such that $h(\gamma) <\xi \Leftrightarrow f_X(\gamma) < \beta$ for $\gamma \in z$.  The function $h \downharpoonleft \xi$ and its code $x \subseteq z$ are in $V$.  We have that $(X \downharpoonleft \beta) \cap z = x$.  Thus $(X \downharpoonleft \beta) \cap y = (X \downharpoonleft \beta) \cap z \cap y = x \cap y \in V$.  This shows that $X \downharpoonleft \beta \in \mathcal A$.

For the second part, let $s = \{\gamma \in r : h(\gamma) < \xi \}$.  Then $s \in N$, and $f_X[s] = f_{X \downharpoonleft \beta}[s] = f_X[r] \cap \beta$.
\end{proof}

To define a $\square^*_\kappa$-sequence in $M$, first consider ordinals $\alpha<j(\delta)$ of cofinality $<\kappa$.  Let $\mathcal C_\alpha$ be the set of all clubs $D$ in $\alpha$ of order-type $<\kappa$, such that for some $X \in \mathcal A$ that codes a prewellordering of $\delta$ of order-type $\alpha$, $D = f_X[s]$ for some $s \in Q$.  By Claim \ref{surjind}, the choice of $X$ does not matter, so the cardinality of this set is at most $|N| =\kappa$.  By Claim \ref{surjint}, if $C \in \mathcal C_\alpha$ and $\beta$ is a limit point of $C$, then $C \cap \beta \in \mathcal C_\beta$.  Furthermore, each such $\mathcal C_\alpha$ is nonempty, since the cofinality of $\alpha$ cannot change between $V$ and $M$.  For suppose $V \models \cf(\alpha) = \mu$ and $M \models \cf(\alpha) = \mu'<\kappa$.  Let $Y\in\p(\delta)^V$ code a witness to $V \models \cf(\alpha)=\mu$.  Then $Y \in M$, so $M \models \cf(\mu) = \mu'$.  We cannot have $\mu = \delta$ because $M \models \cf(\delta) = \kappa$, so $\mu<\delta$.  By elementarity, $M \models \cf(\mu) = \mu$.  Thus there is $X \in \p(\delta)^V$ that codes a prewellordering of $\delta$ of order-type $\alpha$ and a set $s \in \p_\kappa(\delta)^V$ such that $f_X[s]$ is club in $\alpha$.

Now suppose $V \models \cf(\alpha) = \kappa$.  Let $D \in V$ be a club in $\alpha$ of order-type $\kappa$.  Let $f : \delta \to \alpha$ be a surjection in $V$.  If $s$ is an initial segment of $D$ of limit order-type, then $r = f^{-1}[s] \in V$.  If $\beta = \sup(s)$, then $s = (f \downharpoonleft \beta) [r]$, so $s \in \mathcal C_\beta$.  Thus in $M$, there is a club $C \subseteq \alpha$ of order-type $\kappa$ such that all initial segments of limit length are in $\mathcal C_\beta$ for some $\beta<\alpha$.

Finally, suppose $V \models \cf(\alpha) = \delta$.  Let $D \in V$ be a club in $\alpha$ of order-type $\delta$, and let $\la \gamma_\beta : \beta < \delta \ra$ be its increasing enumeration.  Let $f : \delta \to \alpha$ be a surjection in $V$.  Let $g : \delta \to \delta$ be a function in $V$ such that for all $\beta<\delta$, $ f\circ g(\beta) = \gamma_\beta$.  In $M$, let $D' = \{ \gamma_\beta : \beta \in C^* \}$.  Let $s$ be an initial segment of $C^*$.  Let $\gamma = \sup(s)$, and let $\beta<\kappa$ be such that $s \subseteq z = \sigma_\gamma[\beta]$.  Then $g \restriction z$ is coded by an element of $\p_\kappa(\delta)^V$, and so $g \restriction s \in N$.  Thus $\{ \gamma_\beta : \beta \in s \} = f[r]$ for some $r \in N$.  In particular, there exists $C \in M$ that is club in $\alpha$, of order-type $\kappa$, and such that all initial segments of limit length are in $\mathcal C_\beta$ for some $\beta<\alpha$.
 
Although $M$ may not know which ordinals $\alpha$ of cofinality $\kappa$ have cofinality $\delta$ in $V$, we can just choose in either case some club $C \subseteq \alpha$ of order-type $\kappa$ such that all initial segments are in $\mathcal C_\beta$ for some $\beta<\alpha$.  Let $\mathcal C_\alpha = \{C\}$ for any such $C$.
This completes the construction of a $\square^*_\kappa$-sequence in $M$.  By elementarity, $V \models \square^*_\kappa$.
\end{proof}

Shelah \cite{MR675955} showed that if $I$ is a normal presaturated ideal on $\kappa^+$, then $I$ concentrates on $\{ \alpha : \cf(\alpha) = \cf(\kappa) \}$.  Thus the hypothesis of Theorem \ref{wps} can be simplified if we assume the ideal is presaturated.  On the other hand, a combination of theorems of Sargsyan \cite{MR2714009} and Woodin \cite{MR2723878} shows that we cannot drop the assumption that the ideal concentrates on the highest possible cofinality:

\begin{theorem}[Sargsyan-Woodin]
Assume the consistency of a Woodin limit of Woodin cardinals.  Then there is a model of ZFC satisfying:
\begin{enumerate}
\item Bounded Martin's Maximum, which implies $2^{\omega} = \omega_2$ and the tree property at $\omega_2$.
\item $\ns_{\omega_2} \upharpoonright \cof(\omega)$ is strong.
\end{enumerate} 
\end{theorem}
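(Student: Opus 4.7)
The plan is to combine Sargsyan's construction of strong ideals on $\omega_2$ concentrating on cofinality $\omega$ \cite{MR2714009} with Woodin's forcing of Bounded Martin's Maximum \cite{MR2723878}. A Woodin limit of Woodin cardinals $\delta$ is precisely tuned for this dual purpose: below $\delta$ one has a cofinal sequence of Woodin cardinals, providing simultaneously the reservoir needed for embedding-producing stationary tower forcings and for a long semiproper iteration.

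First I would carry out Sargsyan's construction at a Woodin cardinal $\kappa_0<\delta$. A collapse preparation followed by stationary tower forcing at $\kappa_0$ (restricted so that $\omega_2^V$ becomes the critical point) produces a generic elementary embedding $j:V\to M$ with $\crit(j)=\omega_2^V$, $j(\omega_2^V)=\omega_3^V$, and $M$ sufficiently closed in the extension. Translating to a forcing relation in $V$ realizes $\ns_{\omega_2}\upharpoonright\cof(\omega)$ as a strong ideal. It is essential that the construction concentrates the ideal on ordinals of cofinality $\omega$ rather than $\omega_1$, since Theorem \ref{wps} rules out the analogous conclusion for $\cof(\omega_1)$ once the tree property holds at $\omega_2$.

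Second, I would force BMM via Woodin's method, using the Woodin cardinals $\kappa_1<\kappa_2<\cdots<\delta$ above $\kappa_0$ along a semiproper iteration of length $\delta$. Once BMM holds in the final model, $2^\omega=\omega_2$ follows from the Todor\v{c}evi\'c--Moore analysis of BMM, and the tree property at $\omega_2$ follows from Weiss's ITP together with Viale's reflection-theoretic consequences of BMM; this secures item (1).

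The main obstacle is the mutual preservation problem: the BMM iteration must not destroy the strong ideal, and Sargsyan's preparation must not obstruct the subsequent BMM forcing. The preservation fact I would invoke is that semiproper forcings preserve stationarity of subsets of $\omega_2$ concentrating on $\cof(\omega)$, which keeps the positivity structure of $\ns_{\omega_2}\upharpoonright\cof(\omega)$ intact, and via standard absorption and Laver-function bookkeeping the strongness of the ideal survives the iteration. Dovetailing Sargsyan's preparation into the initial stages of the BMM iteration, and verifying stage by stage that the generic embedding witnessing strongness lifts, is the core technical work.
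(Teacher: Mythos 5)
The paper gives no proof of this theorem; it is quoted as a combination of results of Sargsyan \cite{MR2714009} and Woodin \cite{MR2723878}, and the actual route is entirely different from yours: Sargsyan's contribution is the consistency-strength computation that a Woodin limit of Woodin cardinals yields a model of $\mathrm{AD}_{\mathbb R}+{}$``$\Theta$ is regular,'' and Woodin's contribution is that over such a determinacy model a $\mathbb P_{\max}$-style variation produces a ZFC model of BMM in which $\ns_{\omega_2}\upharpoonright\cof(\omega)$ is strong. No iterated forcing over a large-cardinal model is involved, and this is not an accident: nobody knows how to obtain strongness of the \emph{nonstationary} ideal on $\omega_2$ (as opposed to some induced ideal extending it) by a conventional iteration, which is precisely why the determinacy detour is taken.

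The concrete gap in your proposal is the preservation step. The assertion that semiproper forcings preserve stationary subsets of $\omega_2$ concentrating on $\cof(\omega)$ is false: semiproperness only protects stationary subsets of $\omega_1$, and there are semiproper (indeed $\omega_2$-preserving) forcings that shoot clubs through the complements of stationary subsets of $\omega_2\cap\cof(\omega)$. More fundamentally, even if each old stationary set stayed stationary, strongness of $\ns_{\omega_2}\upharpoonright\cof(\omega)$ is a statement about \emph{every} positive set in the final model, including stationary sets added at late stages of the BMM iteration; ``standard absorption and Laver-function bookkeeping'' gives no control over these, and there is no lifting argument that applies to the nonstationary ideal itself after cofinally many subsets of $\omega_2$ have been added. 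Your first step also misattributes the content of \cite{MR2714009}: Sargsyan's theorem is not a stationary-tower construction of a strong ideal on $\omega_2$ from a single Woodin cardinal (a single Woodin cardinal is far too weak for a strong ideal on $\omega_2$, and the tower at a Woodin $\kappa_0$ sends $\omega_2$ to $\kappa_0$, not to $\omega_3$). As written, the proposal does not establish the theorem.
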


Theorem \ref{idealapprox} can be derived from Theorem \ref{wps}.  This is because for regular $\kappa$, if $I$ is normal ideal $I$ on $\kappa^+$ such that $\p(\kappa^+)/I$ has the $\kappa$-approximation property, then $I$ must concentrate on $\cof(\kappa)$.  This follows from Shelah's result that $\kappa^+ \cap \cof({<}\kappa)$ is approachable.  Indeed, any such ideal must contain the approachability ideal.  For suppose $S \in I^+$ and $\la a_\alpha : \alpha < \kappa^+ \ra$ witnesses that $S$ is approachable. Let $G \subseteq \p(\kappa^+)/I$ be generic with $S \in G$, and let $j : V \to M$ be the ultrapower embedding.  By \los' Theorem, there is $A \in M$, an unbounded subset of $(\kappa^+)^V$ of order-type $\leq \kappa$, such that all initial segments are in $\{ a_\alpha : \alpha < \kappa^+ \} \subseteq V$.  But this violates the $\kappa$-approximation property.

Foreman and Magidor \cite{MR1359154} asked whether a saturated normal ideal on $\omega_2$ can contain the approachability ideal.  This question appeared again in \cite{MR2768692}. Since $\square^*_\kappa$ implies that all subsets of $\kappa^+$ are approachable, Theorem \ref{wps} shows that the answer is ``no'' under the assumption that $2^\omega \leq \omega_2$.

If $I$ is a saturated ideal on $\omega_2$ and $2^\omega = \omega_1$, then forcing with $\p(\omega_2)/I$ does not add reals.  It is consistent relative to an almost-huge cardinal that there is a saturated ideal on $\omega_2$ whose associated Boolean algebra has a countably closed dense set, and in particular is a proper forcing  (see \cite{MR2768692}).  Remarkably, this is only possible under CH:

\begin{corollary}
\label{ch}
Suppose $I$ is a normal ideal on $\omega_2$.  Suppose either $I$ is weakly presaturated and $\p(\omega_2)/I$ is a proper forcing, or $I$ is presaturated and $\p(\omega_2)/I$ is a semiproper forcing.  Then the continuum hypothesis holds.
\end{corollary}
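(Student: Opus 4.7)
The plan is to argue by contradiction, assuming $2^{\omega} \ge \omega_2$. Pick a $V$-injection $b\colon \omega_2^V \hookrightarrow \p(\omega)^V$. Force with $P := \p(\omega_2)/I$ and let $j\colon V \to M$ be the resulting generic ultrapower embedding; weak presaturation gives $\crit(j) = \omega_2^V$, $j(\omega_2) = \omega_3^V = \omega_2^M$, and $M$ is well-founded through $\omega_3^V + 1$. The pivotal observation is that $b$ itself belongs to $M$: since $j$ fixes every real and every ordinal below $\omega_2^V$, the restriction $j(b) \upharpoonright \omega_2^V$ coincides with $b$ as a function, and it is $M$-definable from $j(b)$ and $\omega_2^V$. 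Consequently $\ran(b)$---a set of $\omega_2$ many $V$-reals---has $M$-cardinality only $|\omega_2^V|^M = \omega_1^V$.

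Next, one transports this mismatch into $V[G]$ itself. In case (b), presaturation ensures that $M$ is closed under $\omega_2^V$-sequences from $V[G]$; in particular every real of $V[G]$ already lies in $M$, so $\p(\omega)^{V[G]} = \p(\omega)^M$, while semiproperness preserves $\omega_1$. In case (a), where $M$ need not be $\omega$-closed, one instead relies on properness preserving $\omega_1$ and uncountable cofinalities, together with the countable covering property for $V$-sets, to reach the analogous conclusion about $\ran(b)$ inside $V[G]$.

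The final and hardest step is to turn this apparent cardinality-collapse into a genuine contradiction. The cleanest route is to show that $P$ actually adds no new reals: then $\p(\omega)^M = \p(\omega)^V$, so in the case $2^\omega = \omega_2$ (to which one can reduce) we have $\ran(b) = \p(\omega)^V = \p(\omega)^M$, yielding $|\p(\omega)^M|^M = \omega_1^V$; but by elementarity $|\p(\omega)^M|^M = j(2^\omega) \ge j(\omega_2) = \omega_3^V$, which is absurd. I expect the main obstacle to be exactly this ``no new reals'' claim, particularly in case (a), where proper forcing in general is permitted to add reals. One natural workaround is to combine the previous steps with Theorem~\ref{wps} (first verifying that $I$ concentrates on $\cof(\omega_1)$---automatic in case (b) via Shelah's theorem, and in case (a) to be derived from properness in the spirit of the Remark preceding this corollary) to obtain $\square^*_{\omega_1}$ in $V$, and then to contradict the $\omega_1$-enumeration of $\omega_2$-many $V$-reals produced in the second step.
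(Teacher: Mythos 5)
There is a genuine gap, and it sits exactly where you flagged it. Your ``cleanest route'' needs $\p(\omega_2)/I$ to add no new reals, but nothing in the hypotheses gives this: proper and semiproper forcings add reals routinely, and there is no reason the quotient algebra of a (weakly) presaturated ideal should not. Worse, the ``apparent cardinality-collapse'' of your first two steps is not apparent at all --- it is real and harmless. Since $\crit(j)=\omega_2^V$ and $j(\omega_2^V)=\omega_2^M$, the model $M$ (hence $V[G]$) genuinely collapses $\omega_2^V$ to $\omega_1$, so an $\omega_1$-enumeration of $\ran(b)$ in $V[G]$ is exactly what one expects and contradicts nothing. For the same reason your proposed workaround does not close: $\square^*_{\omega_1}$ in $V$ is perfectly compatible with a forcing extension containing such an enumeration (any collapse of $\omega_2$ to $\omega_1$ produces one, and $\square^*_{\omega_1}$ is upward absolute), so ``contradicting the $\omega_1$-enumeration'' is not a viable endgame.

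The missing ingredients are the external theorems the paper routes through. From Sakai, semiproperness of $\p(\omega_2)/I$ implies the cofinal Strong Chang's Conjecture $\mathrm{SCC}^{\cof}$; from Todor\v{c}evi\'{c}, $\mathrm{SCC}^{\cof}$ gives $2^\omega\leq\omega_2$ (which you also need, and do not supply, to meet the hypothesis $2^{<\omega_1}\leq\omega_2$ of Theorem \ref{wps}); and from Torres-Perez--Wu, under $\mathrm{SCC}^{\cof}$ the failure of CH is \emph{equivalent} to the tree property at $\omega_2$. The actual contradiction is then: Theorem \ref{wps} (applied after checking, as you correctly do, that $I$ concentrates on $\cof(\omega_1)$ --- via Shelah in the presaturated case, via preservation of $\cf(\omega_2)>\omega$ in the proper case) yields $\square^*_{\omega_1}$, hence a special $\omega_2$-tree, hence the failure of the tree property at $\omega_2$, which by Torres-Perez--Wu forces CH. Your injection $b$ and the membership $b\in M$ are correct observations but play no role in the paper's argument and do not by themselves generate any inconsistency.
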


Before giving the proof, let us define the ``cofinal Strong Chang's Conjecture,'' abbreviated by $\mathrm{SCC}^{\cof}$ in \cite{MR3861321}.  This states that for every large enough cardinal $\theta$, every countable $M \prec H_\theta$, and every $\alpha<\omega_2$, there is a countable $N \prec H_\theta$ such that $M \subseteq N$, $M \cap \omega_1 = N \cap \omega_1$, and $\sup(N \cap \omega_2)>\alpha$.

\begin{proof}
Let $I$ be a normal ideal on $\omega_2$.  If $\p(\omega_2)/I$ is semiproper, then by Sakai \cite{MR2191239}, $\mathrm{SCC}^{\cof}$ holds.  By Todor\v{c}evi\'{c} \cite{MR1261218}, $\mathrm{SCC}^{\cof}$ implies $2^\omega \leq \omega_2$.  By Torres-Perez and Wu \cite{MR3431031}, if $\mathrm{SCC}^{\cof}$ holds, then the failure of CH is equivalent to the tree property at $\omega_2$.  If $\p(\omega_2)/I$ is a proper forcing, then it cannot change the cofinality $\omega_2$ to $\omega$, so $I$ must concentrate on $\cof(\omega_1)$.  If $I$ is presaturated, then it concentrates on $\cof(\omega_1)$ by Shelah's Theorem.  In either case, the hypotheses imply that $2^\omega \leq \omega_2$ and there is a weakly presaturated ideal concentrating on $\cof(\omega_1)$, which by Theorem \ref{wps} implies that the tree property at $\omega_2$ fails.  Therefore, $2^\omega = \omega_1$.
\end{proof}

\section{Weak square from lifted embeddings}
\label{wsli}

The known methods for forcing either $(\mu^{++},\mu^+)\chang(\mu^+,\mu)$ or the existence of a saturated ideal on $\mu^+$, where $\mu$ is uncountable, start with a huge or almost-huge cardinal $\kappa>\mu$ with witnessing embedding $j : V \to M$, and collapse $\kappa$ to $\mu^+$ and $j(\kappa)$ to $\mu^{++}$ in a way that allows the embedding to be generically lifted.  A variety of such constructions are described in \cite{MR2768692}.  These constructions typically force $\mu^{<\mu}=\mu$, and thus $\square^*_\mu$.  However, we argue here that $\square^*_\mu$ is already guaranteed by certain abstract features of these forcings, without any \emph{prima facie} assumptions about the effect on cardinal arithmetic.  The arguments will apply to embeddings coming from hypotheses weaker than almost-hugeness.

In typical situations, we lift an almost-huge embedding $j : V \to M$ through a forcing $\mathbb P * \dot{\mathbb Q}$, obtaining $j' : V[G*H] \to M[G'*H']$, where $\mathbb P$ is $\crit(j)$-c.c.\ and $\dot{\mathbb Q}$ is not.  In order to lift through $H$, it is usually key to the argument that at least small pieces of $G*H$ are members of $M[G']$.  The next observation shows that this is enough to guarantee that hypothesis (\ref{jtyp_lift}) of Theorem \ref{typcol} holds.

\begin{proposition}
Suppose $\kappa\leq\delta$ are regular cardinals, $M \subseteq V$ is a ${<}\delta$-closed inner model, and $\mathbb P$ is a $\delta$-c.c.\ partial order.  Let $G \subseteq \mathbb P$ be generic over $V$, and suppose $N$ is an outer model of $M$.  Then the following are equivalent:
\begin{enumerate}
\item \label{pko} $\p_\kappa(\ord)^{V[G]} \subseteq N$.
\item \label{pkp} $\p_\kappa(\mathbb P)^{V[G]} \subseteq N$.
\end{enumerate}
\end{proposition}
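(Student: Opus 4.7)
The implication $(\ref{pko}) \Rightarrow (\ref{pkp})$ is essentially bookkeeping: in the standard setup where $\mathbb P \in M$, fix a bijection $\pi : \mathbb P \to |\mathbb P|$ in $M \sub N$. For any $x \in \p_\kappa(\mathbb P)^{V[G]}$ the image $\pi[x]$ lies in $\p_\kappa(\ord)^{V[G]} \sub N$ by $(\ref{pko})$, whence $x = \pi^{-1}[\pi[x]] \in N$.

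The content of the proposition is the converse. Given $x \in \p_\kappa(\ord)^{V[G]}$, set $\mu := |x|^{V[G]} < \kappa \leq \delta$ and fix an enumeration $f : \mu \to \ord$ of $x$ in $V[G]$ with $\mathbb P$-name $\dot f \in V$. Working in $V$, for each $\alpha < \mu$ choose a maximal antichain $A_\alpha \sub \mathbb P$ deciding $\dot f(\alpha)$, and for $p \in A_\alpha$ let $\beta_\alpha^p$ be the ordinal such that $p \Vdash \dot f(\alpha) = \check{\beta_\alpha^p}$. The $\delta$-c.c.\ gives $|A_\alpha| < \delta$, and I package the data as
\[
\dot f^* \,:=\, \la (A_\alpha,\, \la \beta_\alpha^p : p \in A_\alpha \ra) : \alpha < \mu \ra \,\in\, V.
\]
The key point is that $\dot f^* \in M$: each $A_\alpha$ is enumerable in $V$ as a ${<}\delta$-sequence of elements of $\mathbb P \in M$, and each $\la \beta_\alpha^p : p \in A_\alpha \ra$ is a ${<}\delta$-sequence of ordinals, so both lie in $M$ by the ${<}\delta$-closure of $M$; as $\mu < \delta$, the same closure puts the outer length-$\mu$ sequence into $M$ as well. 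Meanwhile, since $G$ meets each maximal antichain in exactly one condition, $T := G \cap \bigcup_{\alpha < \mu} A_\alpha$ has size at most $\mu < \kappa$ in $V[G]$, so by hypothesis $(\ref{pkp})$, $T \in \p_\kappa(\mathbb P)^{V[G]} \sub N$. In $N$ I can now reconstruct $f$ from $\dot f^* \in N$ and $T \in N$ by selecting, for each $\alpha < \mu$, the unique $p_\alpha \in A_\alpha \cap T$ and setting $f(\alpha) := \beta_\alpha^{p_\alpha}$; hence $x = \ran(f) \in N$.

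The main conceptual obstacle is the choice of name. A direct nice name for $x$ regarded as a subset of some $\lambda$ has length $\lambda$, which can easily exceed what the ${<}\delta$-closure of $M$ absorbs; the trick is to name the enumeration $f$ instead of $x$ itself. Because $\mu < \delta$, the entire nice name of $f$ is short enough to live in $M$, and then hypothesis $(\ref{pkp})$ contributes precisely the ${<}\kappa$-sized ``generic selector'' $T$ needed for $N$ to read the name off.
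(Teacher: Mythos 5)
Your proposal is correct and follows essentially the same route as the paper's proof: decide $\dot f(\alpha)$ by maximal antichains, use the $\delta$-c.c.\ together with the ${<}\delta$-closure of $M$ to put the resulting name data into $M$, and then use hypothesis (\ref{pkp}) to hand $N$ the ${<}\kappa$-sized trace of $G$ on those antichains. The only cosmetic difference is in the easy direction, where the paper simply assumes the elements of $\mathbb P$ are ordinals rather than transporting along a bijection.
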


\begin{proof}
We may assume that the elements of $\mathbb P$ are ordinals.  Then (\ref{pko}) $\Rightarrow$ (\ref{pkp}) is trivial.  For the other direction, suppose $\xi<\kappa$ and $f : \xi \to \ord$ is in $V[G]$.  Let $\dot f$ be a name for $f$, and let $p_0 \Vdash \dom \dot f = \check \xi$.  For $\alpha<\xi$, let $A_\alpha$ be a maximal antichain below $p_0$ deciding $\dot f(\check\alpha)$.  For $\alpha<\xi$ and $p\in A_\alpha$, let $\beta_{\alpha,p}$ be the value of $\dot f(\check\alpha)$ decided by $p$.  Define a $\mathbb P$-name:
	$$\tau := \{ (p,\la\check\alpha,\check\beta_{\alpha,p}\ra) : \alpha<\xi \text{ and } p \in A_\alpha \}.$$
	Then $p_0 \Vdash \dot f = \tau$.  By the $\delta$-c.c.\ and the ${<}\delta$-closure of $M$, $\tau \in M$.  Now for all $\alpha<\xi$ there is a unique $q_\alpha \in G\cap A_\alpha$, and $\tau^G$ can be computed from $\tau$ and $\{ q_\alpha : \alpha<\xi \}$.  By hypothesis, $\{ q_\alpha : \alpha<\xi \} \in N$, and therefore $f \in N$.
\end{proof}

\begin{theorem}
\label{typcol}
Suppose $j : V \to M$ is an elementary embedding with critical point $\kappa$ definable from parameters in $V$.  Suppose $\mathbb P* \dot{\mathbb Q}$ is a two-step iteration such that:
\begin{enumerate}
\item\label{jtyp_size} $M$ is $|\mathbb P|$-closed, and $|\mathbb P| < j(\kappa)$.% \in M$ and $M \models |\mathbb P|<j(\kappa)$.
\item\label{jtyp_col} $\mathbb P* \dot{\mathbb Q}$ collapses all ordinals in the open interval $(\kappa,j(\kappa))$.
%\item\label{jtyp_closure} Whenever $G \subseteq \mathbb P$ is generic over $V$, then $\p_\kappa(\ord)^{V[G]} = \p_\kappa(\ord)^{M[G]}$.
\item \label{jtyp_lift} Whenever $G*H$ is $\mathbb P* \dot{\mathbb Q}$-generic over $V$, then in some outer model, $j$ can be lifted to $j' : V[G*H] \to M[G' * H']$, such that $\p_\kappa(\ord)^{V[G*H]} \subseteq M[G']$.
\end{enumerate}
Then $\mathbb P$ forces that $\kappa= \mu^+$ for some $\mu<\kappa$, and $\square^*_\mu$ holds.
\end{theorem}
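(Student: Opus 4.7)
The approach will be to adapt the proof of Theorem \ref{wps} to the lifting setting, with the lifted embedding $j'$ playing the role of the generic ultrapower and the hypothesis $\p_\kappa(\ord)^{V[G*H]} \subseteq M[G']$ substituting for the closure of $M$. The first step is to descend to $V[G]$: by Laver--Woodin ground-model definability, $V[G]$ is a definable class of $V[G*H]$, and applying $j'$ gives that $M[G']$ is the corresponding definable class of $M[G'*H']$. Hence $j^* := j' \restriction V[G] : V[G] \to M[G']$ is elementary with $\crit(j^*) = \kappa$ and $j^*(\kappa) = j(\kappa) = (\kappa^+)^{V[G*H]}$. Since $\mathbb{P} * \dot{\mathbb{Q}}$ collapses $(\kappa, j(\kappa))$ while $|\mathbb{P}| < j(\kappa)$, the tail $\mathbb{Q}$ must collapse every $V[G]$-cardinal in that interval, so $(\kappa^+)^{V[G]} = j(\kappa) = j^*(\kappa)$, and $j^*$ is a generic huge-style embedding of $V[G]$ at $\kappa$.

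Next I would argue that $\mathbb{P}$ forces $\kappa$ to be a successor cardinal, identifying $\mu$ as its $V[G]$-predecessor. The approximation property rules out cofinal branches through $\kappa$-trees of $V[G]$ being added in $V[G*H]$ without being in $M[G']$; as in the proof of Theorem \ref{idealapprox}, this eliminates weak $\kappa$-Kurepa trees of $V[G]$, and combined with Baumgartner's theorem yields cardinal-arithmetic information in $V[G]$ (in particular $2^{<\mu} \leq \mu^+$ once $\mu$ is fixed). The precise identification of $\mu$ and the fact that $\mathbb{P}$ collapses every $V$-cardinal in $(\mu, \kappa)$ follows from the interplay between $|\mathbb{P}| < j(\kappa)$, the $|\mathbb{P}|$-closure of $M$, and the lifting hypothesis, which together force $\mathbb{Q}$ to be unable to collapse cardinals below $\kappa$ without destroying the approximation property.

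With $\kappa = \mu^+$ in $V[G]$ established, the $\square^*_\mu$-sequence is constructed in $M[G']$ by adapting the proof of Theorem \ref{wps} under the substitutions $\kappa \leadsto \mu$, $\delta = \kappa^+ \leadsto \kappa = \mu^+$, $V \leadsto V[G]$, $M \leadsto M[G']$. The required parameters — a $V[G]$-sequence $\vec{\sigma} = \la \sigma_\alpha : \alpha < \kappa \ra$ of surjections $\sigma_\alpha : \mu \to \alpha$, the set $\p_\mu(\kappa)^{V[G]}$, a club $C^* \subseteq \kappa$ of $M[G']$-order-type $\mu$, and a suitable elementary substructure $N \prec H^{M[G']}_{j^*(\kappa)^+}$ of $M[G']$-size $\kappa$ containing these parameters — all lie in $M[G']$ via a combination of the approximation hypothesis and the image-restriction trick ($j^*(\vec\sigma) \restriction \kappa = \vec\sigma$ since ordinals below $\kappa = \crit(j^*)$ are fixed). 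The analogs of Claims \ref{surjind} and \ref{surjint} hold essentially verbatim, producing a $\square^*_\mu$-sequence $\la \mathcal{C}_\alpha : \alpha < j^*(\kappa) \ra$ in $M[G']$. By elementarity of $j^*$, together with $\mu < \crit(j^*) = \kappa$, the statement $\square^*_\mu$ transfers back to $V[G]$.

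The main obstacle will be the middle step: extracting from the abstract hypotheses that $\mathbb{P}$ forces $\kappa$ to be a successor, and verifying the derived cardinal arithmetic in $V[G]$. The third step is a fairly direct adaptation of Theorem \ref{wps}, but the identification of $\mu$ requires careful joint use of (\ref{jtyp_size}), (\ref{jtyp_col}), and (\ref{jtyp_lift}) to constrain the $V[G]$-cardinal structure at $\kappa$.
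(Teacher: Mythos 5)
There is a genuine gap, and it sits exactly where your outline leans hardest. The load-bearing error is the claim in your first paragraph that $(\kappa^+)^{V[G]} = j(\kappa)$. Hypothesis (\ref{jtyp_col}) says that $\mathbb P * \dot{\mathbb Q}$ collapses the interval $(\kappa,j(\kappa))$, not that $\mathbb P$ does; since $|\mathbb P| < j(\kappa)$, the $V$-cardinals in that interval above $|\mathbb P|^+$ survive into $V[G]$ and are only collapsed by $\dot{\mathbb Q}$ (in the intended applications $\mathbb P$ has size about $\kappa$ and $(\kappa^+)^{V[G]} = (\kappa^+)^V$, far below $j(\kappa)$). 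Indeed your own sentence ``the tail $\mathbb Q$ must collapse every $V[G]$-cardinal in that interval'' presupposes that such cardinals exist, i.e.\ that $(\kappa^+)^{V[G]} < j(\kappa)$. This breaks the proposed transplant of Theorem \ref{wps}: that construction requires, for each $\alpha < j^*(\kappa)$, a prewellordering of $\crit(j^*)=\kappa$ of order type $\alpha$ lying in (or approximated by pieces from) the ground structure $V[G]$, and no surjection $\kappa \to \alpha$ exists in $V[G]$ once $\alpha \geq (\kappa^+)^{V[G]}$; such surjections live only in $V[G*H]$. (The restriction $j'\restriction V[G] : V[G] \to M[G']$ and the ground-model-definability point are fine, though overkill.) Separately, your second paragraph does not actually contain an argument that $\kappa$ is a successor cardinal in $V[G]$: ruling out weak Kurepa trees and invoking Baumgartner yields cardinal arithmetic for an already-identified $\mu$, but nothing there produces $\mu$ or shows $\kappa = \mu^+$.

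The paper's proof takes a shorter route that avoids both problems and does not reuse the machinery of Claims \ref{surjind} and \ref{surjint}. It first establishes two facts from the lifting hypothesis: (a) $\dot{\mathbb Q}$ is forced to be $\kappa$-distributive --- if $r \in \p_\kappa(\ord)^{V[G*H]} \setminus V[G]$, then $j'(r) = j[r]$ is computable from a code for $j\restriction r$ lying in $\p_\kappa(\ord)^{V[G*H]} \subseteq M[G']$, contradicting $j'(r) \notin M[G']$, which follows by elementarity from $r \notin V[G]$; and (b) $\p(\kappa)^{V[G*H]} \subseteq M[G']$, via $X = j'(X)\cap\kappa$ and the $j(\kappa)$-distributivity of $j(\dot{\mathbb Q})^{G'}$ over $M[G']$. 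The coherent club system on $j(\kappa)$ is then built in $M[G']$ using clubs taken from $V[G*H]$ --- where every $\alpha < j(\kappa)$ genuinely has cardinality $\kappa$ --- whose initial segments lie in $\p_\kappa(\ord)^{V[G]} = \p_\kappa(\ord)^{M[G]}$ by distributivity and hypothesis (\ref{jtyp_size}), and whose codes as subsets of $\kappa$ lie in $M[G']$ by (b). Finally, the successorship of $\kappa$ in $V[G]$ is a \emph{consequence} of this construction rather than a prerequisite: every ordinal in $(\kappa,j(\kappa))$ carries a club of order type $\leq\kappa$ in $M[G']$, so $j(\kappa)$ is not a limit cardinal there, and elementarity of $j\restriction V[G]$ pulls this back to $\kappa$ in $V[G]$, after which Lemma \ref{otreduct} finishes. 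To salvage your outline you would need to abandon the identification $j^*(\kappa) = (\kappa^+)^{V[G]}$ and prove the distributivity and $\p(\kappa)$-capturing facts first; at that point the Theorem \ref{wps} apparatus is no longer needed.
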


\begin{proof}
First we claim that $\dot{\mathbb Q}$ is forced to be $\kappa$-distributive.  Let $G*H \subseteq \mathbb P *\dot{\mathbb Q}$ be generic.  Suppose $r \in \p_\kappa(\ord) \cap V[G*H] \setminus V[G]$.  Let $j' : V[G*H] \to M[G' * H']$ be a lifting of $j$ as in (\ref{jtyp_lift}).  Note that $j'(r) = j[r]$, and there is an $s \in \p_\kappa(\ord)^{V[G*H]}$ that codes $j \restriction r$.  Since $s \in M[G']$, we have that $j'(r) \in M[G']$. 
But by elementarity, $j'(r) \in M[G' * H'] \setminus M[G']$, a contradiction.

Next we claim that $\p(\kappa)^{V[G*H]} \subseteq M[G']$.  Suppose $X \in\p(\kappa)^{V[G*H]}$.  Then $X = j'(X) \cap \kappa$ is in $M[G'*H']$.  But since $M[G'] \models$ ``$j(\dot{\mathbb Q})^{G'}$ is $j(\kappa)$-distributive,'' $X \in M[G']$.  Note that since $|\mathbb P|$ is collapsed to $\kappa$, $G$ is coded by a subset of $\kappa$ in $V[G*H]$, and thus $G \in M[G']$.

By hypothesis (\ref{jtyp_size}), $\p_\kappa(\ord)^{V[G]} = \p_\kappa(\ord)^{M[G]}$.
In $M[G']$, define a sequence $\la \mathcal C_\alpha : \alpha < j(\kappa) \ra$ as follows.  If $M[G] \models \cf(\alpha)<\kappa$, let $\mathcal C_\alpha$ be the set of all clubs in $\alpha$ of order-type $<\kappa$ that live in $M[G]$.  Since 
$|\mathbb P| < j(\kappa)$ 
and $j(\kappa)$ is inaccessible in $M$, each such set has size $<j(\kappa)$ in $M[G]$, and thus size $\leq\kappa$ in $M[G']$.  If $M[G] \models \cf(\alpha)\geq\kappa$, then this is true in $V[G]$, and it remains true in $V[G*H]$ by distributivity.  In $V[G*H]$, $|\alpha| = \kappa$, so there is a club $C \subseteq \alpha$ of order-type $\kappa$.  All its initial segments are in $\p_\kappa(\ord)^{M[G]}$.   $C$ is coded by some $X \subseteq \kappa$, which is in $M[G']$.  Working in $M[G']$ we let $\mathcal C_\alpha = \{ D \}$, where $D$ is any club in $\alpha$ of order-type $\kappa$ such that all initial segments are in $M[G]$. 

Since $j(\kappa)$ is not a limit cardinal in $M[G']$, it follows by elementarity that $\kappa = \mu^+$ in $V[G]$ for some $\mu<\kappa$, and thus $j(\kappa)= \mu^+$ in $M[G']$.  We conclude using Lemma \ref{otreduct} that $\square^*_\mu$ holds in $M[G']$.  By elementarity, $\square^*_\mu$ holds in $V[G]$.
\end{proof}

If we weaken hypothesis (\ref{jtyp_col}) of Theorem \ref{typcol} to say just that all ordinals in some final segment of $j(\kappa)$ are collapsed, then the argument does not go through:

\begin{proposition}
Suppose $\kappa$ is measurable, and $j : V \to M$ is derived from a normal measure on $\kappa$.  Then there is a two-step iteration $\mathbb P * \dot{\mathbb Q}$ such that:
\begin{enumerate}
\setcounter{enumi}{-1}
\item\label{ext_tp} $\mathbb P * \dot{\mathbb Q}$ forces that $\kappa=\omega_2$ and $\omega_2$ has the tree property.
\item \label{ext_size} $|\mathbb P| = \kappa$. % \in M$ and $M \models |\mathbb P|<j(\kappa)$.
\item\label{ext_col} $\mathbb P* \dot{\mathbb Q}$ collapses all ordinals in the open interval $(\kappa^+,j(\kappa))$.
\item\label{ext_absorb} Whenever $G*H$ is $\mathbb P* \dot{\mathbb Q}$-generic over $V$, then in some outer model, $j$ can be lifted to 
$j' : V[G*H] \to M[G' * H']$, such that $\p_{\kappa^+}(\ord)^{V[G*H]} \subseteq M[G']$.
\end{enumerate}
\end{proposition}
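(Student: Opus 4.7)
The plan is to take $\mathbb{P}$ to be the standard Mitchell forcing $\mathbb{M}(\omega_1, \kappa)$ of size $\kappa$, and $\dot{\mathbb{Q}}$ to be the canonical $\mathbb{P}$-name for the L\'evy collapse $\col(\kappa^+, {<}j(\kappa))$ as computed in $V[G]$. Properties (\ref{ext_size}), (\ref{ext_tp}), and (\ref{ext_col}) are then close to immediate: $|\mathbb{P}| = \kappa$ by inspection; Mitchell's classical theorem (using that measurability of $\kappa$ implies its weak compactness) gives that $\mathbb{P}$ forces $\kappa = \omega_2$ together with the tree property at $\omega_2$, which is preserved by $\dot{\mathbb{Q}}$ since $\dot{\mathbb{Q}}$ is forced to be $\kappa^+$-closed and so adds no new $\omega_2$-trees or branches; and $\dot{\mathbb{Q}}$ is arranged to collapse precisely the interval $(\kappa^+, j(\kappa))$.

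For the lifting (\ref{ext_absorb}): working in $M$, decompose $j(\mathbb{P})$ canonically as $\mathbb{P} * \dot{\mathbb{R}}$, where $\dot{\mathbb{R}}$ is the Mitchell tail from $\kappa$ to $j(\kappa)$. In an outer model of $V[G*H]$, pick $G_R$ that is $M[G]$-generic for $\dot{\mathbb{R}}^G$, and set $G' := G * G_R$, so $G'$ is $j(\mathbb{P})$-generic over $M$. Since $\mathbb{P} \subseteq V_\kappa$ is fixed pointwise by $j$, we have $j[G] = G \subseteq G'$, so the standard lifting lemma yields $j_0 : V[G] \to M[G']$. To lift through the collapse step, form the master condition $q^* := \bigcup j_0[H]$; its domain has size at most $|j[\kappa^+]| = \kappa^+$, which is strictly less than $j_0(\kappa^+) = (j(\kappa)^+)^M$, so $q^*$ is a legitimate condition in the $j(\kappa^+)$-closed forcing $j_0(\dot{\mathbb{Q}}^G) = \col(j(\kappa^+), {<}j(j(\kappa)))^{M[G']}$. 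In a further outer model, force below $q^*$ in the Boolean completion of this forcing over $M[G']$ to obtain an $M[G']$-generic $H'$ containing $q^*$, yielding the full lift $j' : V[G*H] \to M[G'*H']$.

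For the approximation clause $\p_{\kappa^+}(\ord)^{V[G*H]} \subseteq M[G']$: since $\dot{\mathbb{Q}}$ is $\kappa^+$-closed in $V[G]$, it adds no subset of $\ord$ of size $\leq \kappa$, so it suffices to check $\p_{\kappa^+}(\ord)^{V[G]} \subseteq M[G']$. Let $X \in V[G]$ be a set of ordinals of size at most $\kappa$, fix an enumeration $f : \kappa \to X$ in $V[G]$, and let $\dot{f}$ be a $\mathbb{P}$-name for $f$. Using the $\kappa^+$-chain condition of $\mathbb{P}$, $\dot{f}$ may be taken as a nice name consisting of at most $\kappa$-many triples $(\check\beta, \check\gamma, p)$ with $\beta < \kappa$, $\gamma \in \ord$, and $p \in \mathbb{P} \subseteq V_\kappa$. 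Each such triple lies in $M$, and since $M^\kappa \subseteq M$, the name $\dot{f}$ itself is in $M$. Therefore $f = \dot{f}^G \in M[G] \subseteq M[G']$, and $X = \ran(f) \in M[G']$. The main obstacle will be the careful setup of the master condition argument in the outer model, in particular arranging an $M[G']$-generic $H'$ containing $q^*$ even though $q^*$ itself need not be a member of $M[G']$.
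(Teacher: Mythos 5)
Your choice of forcing is exactly the paper's (Mitchell forcing followed by $\col(\kappa^+,{<}j(\kappa))$ of $V[G]$), and your handling of items (0)--(2), of the lift through $\mathbb P$, and of the approximation clause via nice names plus $M^\kappa\subseteq M$ all match the intended argument. The gap is in the lift through the collapse, and it is exactly the ``main obstacle'' you concede at the end. First, a smaller error: the domain of $q^*=\bigcup j_0[H]$ is not of size $\kappa^+$. A condition $p\in H$ may have $|\dom p|=\kappa=\crit(j_0)$, so $\dom j_0(p)=j_0(\dom p)$ has size $j(\kappa)$ in $M[G']$, and the union over the $j(\kappa)$-many $p\in H$ has size $j(\kappa)$; this alone is survivable since $j(\kappa)<j_0(\kappa^+)$. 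The fatal problem is that $q^*$ must be an \emph{element} of $M[G']$ in order to be a condition of $j_0(\dot{\mathbb Q}^G)$ at all, and there is no reason it should be: $H\notin M[G']$ (it collapses $j(\kappa)=\omega_2^{M[G']}$ to $\kappa^+$), and likewise $j[\kappa^+]\notin M[G']$ (it would witness that $j(\kappa^+)=(j(\kappa)^+)^{M[G']}$ has cofinality $\leq\kappa^+$ there), so the natural ingredients for building $q^*$ inside $M[G']$ are unavailable. One cannot ``force below $q^*$ in the Boolean completion over $M[G']$'' when $q^*$ is not a member of that Boolean algebra; a generic filter over $M[G']$ ``containing $q^*$'' is not a meaningful requirement. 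So item (3) is not established as written.

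The paper closes this step without any master condition and without any further forcing: it shows that the filter \emph{generated} by $j_0[H]$ is already $j_0(\dot{\mathbb Q}^G)$-generic over $M[G']$. The point is that $M[G']$ is (the lift of) a one-generator ultrapower, so every dense open $D\subseteq j_0(\dot{\mathbb Q}^G)$ in $M[G']$ has the form $j_0(f)(\kappa)$ for some $f\in V[G]$ with domain $\kappa$ such that each $f(\alpha)$ is a dense open subset of $\mathbb Q=\dot{\mathbb Q}^G$. Since $\mathbb Q$ is $\kappa^+$-closed, $E=\bigcap_{\alpha<\kappa}f(\alpha)$ is dense open, $j_0(E)\subseteq D$ by elementarity, and any $q\in H\cap E$ gives $j_0(q)\in D\cap j_0[H]$. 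You should replace the master-condition step with this diagonal argument (the rest of your proof then goes through unchanged, and as a bonus $H'$ lives already in the model where $G'$ does).
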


\begin{proof}
Let $j : V \to M$ be as hypothesized, and note that $M^\kappa \subseteq M$.  Let $\mathbb P$ be any $\kappa$-c.c.\ forcing of size $\kappa$ that forces $\kappa = \omega_2$ and the tree property holds at $\omega_2$, such as Mitchell's forcing.  In $V^{\mathbb P}$, let $\mathbb Q = \col(\kappa^+,{<}j(\kappa))$.  Let $G * H \subseteq \mathbb P * \dot{\mathbb Q}$ be generic.  $\mathbb Q$ preserves the tree property since it does not add subsets of $\kappa$.  Thus (\ref{ext_tp}), (\ref{ext_size}), and (\ref{ext_col}) hold.  

Since $\mathbb P$ is $\kappa$-c.c.\ and $\kappa = \crit(j)$, $\mathbb P$ is a regular suborder of $j(\mathbb P)$, and thus a further forcing yields $G' \subseteq j(\mathbb P)$ such that $G = G' \cap V_\kappa$.  Thus we can extend the embedding to $j : V[G] \to M[G']$.  Furthermore, $j[H]$ generates a filter $H'$ that is $j(\mathbb Q)$-generic over $M[G']$.  This is because for each dense open $D \subseteq j(\mathbb Q)$ in $M[G']$, there is a function $f : \kappa \to \p(\mathbb Q)$ in $V[G]$ such that $f(\alpha)$ is a dense open subset of $\mathbb Q$ for all $\alpha< \kappa$, and $D = j(f)(\kappa)$.  If $E = \bigcap_{\alpha<\kappa} f(\alpha)$, then $E$ is a dense open subset of $\mathbb Q$, and $j(E) \subseteq D$.  Since $H$ is generic, there is $q \in E \cap H$, so $j(q) \in D \cap H'$.  Since $\mathbb Q$ is $\kappa^+$-closed, $\p_{\kappa^+}(\ord)^{V[G*H]} = \p_{\kappa^+}(\ord)^{V[G]} = \p_{\kappa^+}(\ord)^{M[G]} \subseteq M[G']$, establishing (\ref{ext_absorb}).
\end{proof}

\begin{remark}
The above argument also applies to embeddings derived from short extenders.
\end{remark}

Nonetheless, we can carry out a similar argument as for Theorem \ref{typcol} under weaker collapsing conditions, by adding more assumptions about the forcing.  The idea is that $\square^*_\mu$ will be forced whenever $\mathbb P * \dot{\mathbb Q}$ forces that $j(\kappa)$ is the successor of a cardinal $\lambda$ satisfying $\lambda^{<\lambda} = \lambda$, and $M^{j(\mathbb P)}$ can see enough of this structure.

\begin{proposition}
Suppose that $j : V \to M$ is an elementary embedding with critical point $\kappa$ definable from parameters in $V$, and $\mu,\lambda$ are regular such that $\mu<\kappa\leq \lambda < j(\kappa)$.  Suppose $\mathbb P* \dot{\mathbb Q}$ is such that:
\begin{enumerate}
\item $\mathbb P * \dot{\mathbb Q} \in M$.
\item\label{jtyp_sizes1} $M \models |\mathbb P|<j(\kappa)$, $\mathbb P * \dot{\mathbb Q}$ is $j(\kappa)$-c.c., and $\mathbb P * \dot{\mathbb Q} \subseteq V_{j(\kappa)}$.
\item $\mathbb P * \dot{\mathbb Q}$ forces over $M$ that $\kappa = \mu^+$ and $j(\kappa) = \lambda^{+}$.
\item\label{jtyp_closure1} $\Vdash^M_{\mathbb P} \dot{\mathbb Q}$ is $\lambda$-distributive.
\item \label{jtyp_lift1} Whenever $G*H$ is $\mathbb P* \dot{\mathbb Q}$-generic over $V$, then in some outer model, $j$ can be lifted to $j' : V[G*H] \to M[G' * H']$, such that for all $\alpha<j(\kappa)$, $G*H_\alpha \in M[G']$, where $H_\alpha = H \cap V_\alpha$.
\end{enumerate}
Then $\mathbb P$ forces $\square^*_\mu$.
\end{proposition}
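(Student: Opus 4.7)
The strategy parallels Theorem \ref{typcol}: construct a $\square^*_\mu$-sequence in $M[G']$ and transfer by elementarity of a suitable restriction of the lift. Fix a $V$-generic $G * H \subseteq \mathbb{P} * \dot{\mathbb Q}$ and a lift $j' : V[G*H] \to M[G'*H']$ from condition (5), and set $\tilde j := j' \restriction V[G]$. Since $\tilde j(\dot x^G) = j(\dot x)^{G'} \in M[G']$ for every $V$-name $\dot x$, $\tilde j$ maps $V[G]$ into $M[G']$; it is elementary for formulas about $H_{\mu^{++}}$, because $M[G']$ and $M[G'*H']$ agree on $H_{\mu^{++}}$ (the forcing $j(\dot{\mathbb Q})^{G'}$ is $j(\lambda)$-distributive in $M[G']$ by elementarity, and $j(\lambda) > \mu^+$ there). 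Because $\tilde j(\mu) = \mu$, it suffices to show $M[G'] \models \square^*_\mu$. By (3) and (4), $\mathbb{P}$ alone forces $\kappa = \mu^+$ over $M$; since $M[G] \subseteq V[G]$ admits no $V[G]$-cardinals in $(\mu, \kappa)$, we have $V[G] \models \kappa = \mu^+$, so by elementarity $(\mu^+)^{M[G']} = j(\kappa)$. Moreover $G = \{p \in \mathbb{P} : j(p) \in G'\} \in M[G']$, so $M[G] \subseteq M[G']$.

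In $M[G']$, build $\la \mathcal{C}_\alpha : \alpha < j(\kappa) \ra$ by cases on $\cf^{M[G]}(\alpha)$. If $\cf^{M[G]}(\alpha) < \kappa$, let $\mathcal{C}_\alpha$ consist of all clubs in $\alpha$ of order-type $<\kappa$ that lie in $M[G]$; under the implicit arithmetic hypothesis $M[G] \models \lambda^{<\lambda} = \lambda$ (flagged in the paragraph preceding the proposition), this family has $M[G]$-cardinality at most $\lambda$, which collapses to at most $\mu$ in $M[G']$ because $j(\mathbb{P})$ collapses $\lambda$ to $\mu$ over $M$ (by elementarity applied to $\mathbb{P}$'s collapse of $\kappa$). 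If $\cf^{M[G]}(\alpha) \geq \kappa$, pick a club $D_\alpha \in M[G]$ in $\alpha$ of order-type $\cf^{M[G]}(\alpha) \leq \lambda$ and, working in $M[G']$, thin it to a cofinal $C_\alpha \subseteq D_\alpha$ of order-type $\mu$ whose bounded initial segments all lie in $M[G]$; set $\mathcal{C}_\alpha = \{C_\alpha\}$. Coherence follows: if $D \in \mathcal{C}_\alpha$ and $\beta \in \alpha \cap \lim D$, then $D \cap \beta \in M[G]$ has order-type $<\kappa$, forcing $\cf^{M[G]}(\beta) < \kappa$ and placing $D \cap \beta$ into $\mathcal{C}_\beta$. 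Every club has order-type $\leq \mu < j(\kappa) = (\mu^+)^{M[G']}$, so Lemma \ref{otreduct} (with $\xi = \mu$) yields $M[G'] \models \square^*_\mu$, and elementarity of $\tilde j$ transfers this to $V[G]$.

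The main obstacle is the second case of the construction: arranging a thinned $C_\alpha \in M[G']$ all of whose bounded initial segments lie in $M[G]$. This is precisely where condition (5) enters: one codes the thinning using $G * H_\gamma \in M[G']$ for a suitable $\gamma < j(\kappa)$ depending on $\beta$, so that each $C_\alpha \cap \beta$ is a $<\lambda$-sized subset of some $M[G * H_\gamma]$ and hence, by the $\lambda$-distributivity in (4), already in $M[G]$. A secondary difficulty is the cardinality count in the first case, which leverages $M[G] \models \lambda^{<\lambda} = \lambda$; this arithmetic assumption does not appear among the formal hypotheses but is made explicit by the authors in the preceding paragraph as part of the ``$M^{j(\mathbb{P})}$ can see enough of this structure'' clause.
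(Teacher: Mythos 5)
Your overall skeleton -- build the sequence in $M[G']$ by cases on the $M[G]$-cofinality of $\alpha$, then transfer to $V[G]$ by elementarity of the restricted lift -- is the paper's approach. But the second case of your construction, which you yourself flag as ``the main obstacle,'' contains a genuine gap, and it is exactly the step where hypotheses (\ref{jtyp_sizes1}) and (\ref{jtyp_lift1}) have to do their work. First, you split at $\kappa$ rather than at $\lambda$ and insist on thinning to a club of order-type $\mu$. This is both unnecessary and unachievable. Unnecessary because Lemma \ref{otreduct} tolerates order-types up to any fixed $\xi < j(\kappa) = (\mu^+)^{M[G']}$, so clubs of order-type $\leq \lambda$ are perfectly acceptable; this is precisely why the lemma is stated with an arbitrary bound $\xi$. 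Unachievable because a cofinal $C_\alpha \subseteq D_\alpha$ of order-type $\mu$ chosen \emph{in $M[G']$} has no reason to have its proper initial segments in $M[G]$: the forcing $j(\mathbb P)$ collapses $\lambda$ to $\mu$ over $M$ and so adds many new small sets of ordinals. Your proposed repair via condition (\ref{jtyp_lift1}) does not close this gap, because (\ref{jtyp_lift1}) only imports pieces of the generic $G*H$ -- hence sets belonging to $M[G*H]$ -- into $M[G']$; it says nothing about a set manufactured inside $M[G']$ having initial segments in $M[G*H]$, let alone in $M[G]$. A further problem: when $\cf^{M[G]}(\alpha) \geq \lambda$ there need not be any club $D_\alpha \in M[G]$ in $\alpha$ of order-type $\leq\lambda$ at all, since it is $\dot{\mathbb Q}$, not $\mathbb P$, that witnesses $j(\kappa) = \lambda^+$.

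The correct argument runs the implication in the other direction. Split at $\lambda$: for $\cf^{M[G]}(\alpha) < \lambda$ take all clubs of order-type $<\lambda$ lying in $M[G]$ (nonempty, and of size $<j(\kappa)$, hence $\leq\mu$, in $M[G']$). For $\cf^{M[G]}(\alpha) \geq \lambda$, work in $M[G*H]$, where $\cf(\alpha) = \lambda$ exactly (it cannot drop below $\lambda$ by hypothesis (\ref{jtyp_closure1})), and fix there a club $D \subseteq \alpha$ of order-type $\lambda$. Its proper initial segments are $<\lambda$-sized sets of ordinals in $M[G*H]$ and hence lie in $M[G]$ automatically, by $\lambda$-distributivity -- no thinning is needed. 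The role of hypothesis (\ref{jtyp_sizes1}) (the $j(\kappa)$-c.c.\ together with $\mathbb P * \dot{\mathbb Q} \subseteq V_{j(\kappa)}$), which your proposal never invokes, is to localize $D$: a nice name for $D$ involves fewer than $j(\kappa)$ many antichains, each contained in $V_\gamma \cap (\mathbb P*\dot{\mathbb Q})$ for some $\gamma < j(\kappa)$, so $D \in M[G*H_\gamma]$ for a single $\gamma < j(\kappa)$, and then (\ref{jtyp_lift1}) puts $D$ into $M[G']$. Setting $\mathcal C_\alpha = \{D\}$ and applying Lemma \ref{otreduct} with $\xi = \lambda$ finishes. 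Your observation that the cardinality count in the first case leans on arithmetic of the form $\lambda^{<\lambda} = \lambda$ (advertised in the paragraph preceding the proposition but not among the formal hypotheses) is a fair reading of a point the paper also treats tersely, but it is peripheral to the real defect above.
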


\begin{proof}
Let $G*H \subseteq \mathbb P *\dot{\mathbb Q}$ be generic, and let $j' : V[G*H] \to M[G' * H']$ be a lifting of $j$ as in (\ref{jtyp_lift1}).
In $M[G']$, we can define the set $A \subseteq j(\kappa)$ of ordinals that have cofinality $<\lambda$ in $M[G]$, which is the same as the set of ordinals $<j(\kappa)$ that have cofinality $<\lambda$ in $M[G*H]$ by $\lambda$-distributivity.

In $M[G']$, define a sequence $\la \mathcal C_\alpha : \alpha < j(\kappa) \ra$ as follows.  If $\alpha \in A$, let $\mathcal C_\alpha$ be the set of all clubs in $\alpha$ of order-type $<\lambda$ that live in $M[G]$.  Since $|\mathbb P|<j(\kappa)$, this set has size $<j(\kappa)$ in $M[G']$.  If $\alpha \in j(\kappa)\setminus A$, let $\mathcal C_\alpha = \{ D\}$, where $D$ is any club in $\alpha$ of order-type $\lambda$ such that all initial segments are in $M[G]$.  Such a club exists in $M[G']$ because there is one in $M[G*H_\alpha]$ for some $\alpha<j(\kappa)$.  Conclude using Lemma \ref{otreduct} that $\square^*_\mu$ holds in $M[G']$ and thus in $V[G]$ by elementarity.
\end{proof}

We finish by giving an example to clear up a possible misconception.  In the situation of Proposition \ref{changsquare1} and Theorem \ref{idealapprox}, we derive $\square^*_\kappa$ from the fact that $\square^*_{\kappa^+}$ holds in $V$, a generic ultrapower $M$ can see enough information about $V$ to know this, and the ultrapower embedding allows us to reflect this downward.  But this does not characterize all of the situations we have discussed.  In fact, we can force a saturated ideal on $\omega_2$ along with the tree property at $\omega_3$ using conventional methods.  The weak square sequence of length $j(\kappa)$ as constructed in Theorem \ref{typcol} may only exist in a generic extension that we do not actually wish to take, but its virtual existence is enough to ensure the failure of the tree property at $\kappa$ in the universe of interest.

\begin{proposition}
If there is a huge cardinal, then there is a generic extension in which there is a saturated ideal on $\omega_2$ and the tree property holds at $\omega_3$.
\end{proposition}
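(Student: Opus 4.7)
The plan is to adapt Kunen's classical construction of a saturated ideal on $\omega_2$ from a huge cardinal, replacing the usual final Levy collapse of $\lambda$ onto $\kappa^+$ with a Mitchell-style forcing that both collapses $\lambda$ and yields the tree property at what becomes $\omega_3$. Let $j\colon V\to M$ witness that $\kappa$ is huge, with $j(\kappa)=\lambda$ and ${}^\lambda M\subseteq M$. First I would force with a preparatory iteration of the form $\mathbb{P}_0 * \col(\omega_1,{<}\kappa)$, where $\mathbb{P}_0$ is a Laver-style preparation of the huge embedding via a Laver function for $\kappa$; this makes $\kappa=\omega_2$ in the intermediate model and ensures $j$ lifts to $j\colon V[G_0]\to M[G_0^*]$ with $G_0^*$ built from $G_0$ using the Laver function. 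Next, over this intermediate model, I would force with a Mitchell-type poset $\mathbb{M}\in V_\lambda$ that collapses every ordinal in the interval $(\kappa,\lambda)$ to $\kappa$, has the $\lambda$-chain condition, factors as an ``adding'' part followed by a $\kappa^+$-closed term-forcing part, and forces the tree property at $\lambda$; such an $\mathbb{M}$ exists because $\lambda$ remains (much more than) weakly compact in the intermediate model, by elementarity applied to the hugeness of $\kappa$.

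The key step is lifting $j$ through $\mathbb{M}$ via a master condition argument. Since ${}^\lambda M\subseteq M$ and $|\mathbb{M}|=\lambda$, the pointwise image $j[H]$ of an $\mathbb{M}$-generic $H$ is a directed subset of $j(\mathbb{M})$ of size $\lambda$ lying in $M[G_0^*]$. Using the closure of the appropriate factor of $j(\mathbb{M})$, one obtains a master condition below $j[H]$, and a further generic $H^*$ extending it produces a lift $j'\colon V[G_0*H]\to M[G_0^* * H^*]$ in some outer model. The induced normal ideal on $\kappa=\omega_2$, defined in $V[G_0*H]$ by $A\in I$ iff $1\Vdash \kappa\notin j'(\check A)$, is $\lambda$-saturated by the $\lambda$-chain condition of the quotient $j(\mathbb{P}_0*\col*\dot{\mathbb{M}})/(\mathbb{P}_0*\col*\dot{\mathbb{M}})$ used to produce $H^*$, hence a saturated ideal on $\omega_2$. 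The tree property at $\omega_3=\lambda$ holds in $V[G_0*H]$ because $\mathbb{M}$ forces it; note that the further generic $H^*$ is only needed to witness the saturation relation and is not part of the ambient universe.

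The main obstacle is to design $\mathbb{M}$ so that it simultaneously gives the tree property at $\lambda$ in Mitchell's fashion and admits a clean lifting of the huge embedding $j$. The tension is that the ``adding'' factor of a Mitchell-style poset is not $\kappa^+$-closed, while the master condition argument typically requires strong closure of the relevant factor of $j(\mathbb{M})$. One resolves this by factoring $\mathbb{M}$ as a two-step iteration whose upper part is genuinely $\kappa^+$-closed in the ground model and absorbs the image of the lower part, and by verifying, using the hugeness of $\kappa$ and the closure ${}^\lambda M\subseteq M$, that $j[H]$ can be collected into $M[G_0^*]$ and extended to a master condition there.
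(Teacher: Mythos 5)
Your overall strategy---run a Kunen-style construction but replace the final collapse by a Mitchell-type poset and lift through it with a master condition---has a gap that is not merely technical, and it sits exactly where you assert the $\lambda$-chain condition of the quotient. Whatever forcing produces the lift $j'$ over your final model $V[G_0*H]$ must, at a minimum, supply a generic for $j(\dot{\mathbb M})$ over the target model, and $j(\dot{\mathbb M})$ is a Mitchell-type poset at $j(\kappa)=\lambda$ that collapses every cardinal in the interval $(\lambda, j(\lambda))$. No such forcing can have the $\lambda$-c.c.\ (a $\lambda$-c.c.\ forcing preserves $\lambda^+$), so the quotient $j(\mathbb P_0*\col*\dot{\mathbb M})/(\mathbb P_0*\col*\dot{\mathbb M})$ is not $\lambda$-c.c.\ either. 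Since the saturation argument works by sending an antichain of $I$-positive sets $X_i$ to an antichain of conditions forcing $\kappa\in j'(X_i)$, losing the $\lambda$-c.c.\ of the witnessing forcing loses the proof: what your argument yields is only $j(\lambda)$-saturation, which says nothing useful about an ideal on $\omega_2$. The master-condition difficulty with the non-closed Cohen factor of $\mathbb M$, which you flag but do not actually resolve, is a second genuine obstacle, but even a successful lift would leave this chain-condition problem in place.

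The paper's proof avoids ever forcing with $j(\mathbb M)$. It runs the Kunen--Magidor construction with $\mathbb P*\dot\col(\kappa,{<}\delta)$ (where $\delta=j(\kappa)$), arranged so that $j(\mathbb P)$ is $\delta$-c.c.\ and already absorbs $\mathbb P*\dot\col(\kappa,{<}\delta)$; hence the lift of $j$ to $V[G*H]$ is produced by the $\delta$-c.c.\ quotient $j(\mathbb P)/(G*H)$. It then adds a generic $K$ for $\add(\nu,\delta)$ and lifts through that by transferring $j[K]$ and forcing with the $\delta$-c.c.\ poset $\add(\nu,j(\delta)\setminus j[\delta])$. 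The desired model is taken to be the \emph{intermediate} extension $V[G*Q]$, where $Q$ is the Mitchell generic obtained by projecting $K\times H$: the tree property at $\delta$ holds there because $Q$ is Mitchell-generic over $V[G]$, and the composed forcing $\mathbb R$ leading from $V[G*Q]$ to the model with the lifted embedding is $\delta$-c.c., which is exactly what the saturation argument requires. This step---realizing the Mitchell extension as an intermediate model of a product, rather than forcing with Mitchell's poset directly and lifting through it---is the idea your proposal is missing.
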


\begin{proof}
Suppose $\kappa$ is huge.  In particular, there is an almost-hugeness embedding $j : V \to M$ with critical point $\kappa$ such that $\delta=j(\kappa)$ is weakly compact.  By Magidor's modification of Kunen's construction (see \cite{MR2768692}), there is a countably closed forcing $\mathbb P \subseteq V_\kappa$ that turns $\kappa$ into $\omega_2$ and is such that $j(\mathbb P)$ is $\delta$-c.c.\ and $j(\mathbb P)$ projects to $\mathbb P * \dot\col(\kappa,{<}\delta)$.  Furthermore, whenever $G * H \subseteq \mathbb P * \dot\col(\kappa,{<}\delta)$ is generic, then the quotient $j(\mathbb P)/(G *H)$ forces that there is a lifting of $j$ to $j' : V[G*H] \to M[G' * H']$.  

Next, let $\nu$ be either $\omega$ or $\omega_1$.  By GCH, $\add(\nu,\delta)$ is $\kappa$-c.c.  If $K \subseteq \add(\nu,\delta)$ is generic over $V[G*H]$, then further forcing allows us to lift the embedding to $j'' : V[G*H*K] \to M[G'*H'*K']$, where $K'$ is generic over $V[G']$ for $\add(\nu,j(\delta)\setminus j[\delta])$.  Note that this is $\delta$-c.c.\ in $V[G']$.

Now it is well-known that Mitchell's forcing $\mathbb M$ for the tree property at $\delta$ is a projection of $\add(\nu,\delta) \times \col(\kappa,{<}\delta)$.  Let $Q$ be the projection of $K \times H$, which is an $\mathbb M$-generic filter over $V[G]$.  There is a $\delta$-c.c.\ forcing $\mathbb R$ in $V[G*Q]$ that produces the lifted embedding $j''$.  In $V[G*Q]$, we define a normal ideal $I$ on $\omega_2$ by
$$I = \{ X \subseteq \kappa : 1 \Vdash_{\mathbb R} \kappa \notin j''(X) \}.$$
If $X_0,X_1 \in I^+$, then there are $r_0,r_1 \in \mathbb R$ such that $r_i \Vdash \kappa \in j''(X_i)$.  If $X_0 \cap X_1 \in I$, then $r_0,r_1$ are incompatible.  Thus $\p(\kappa)/I$ is $\delta$-c.c.
In summary, $V[G*Q]$ has a saturated ideal on $\kappa = \omega_2$ and satisfies the tree property at $\delta=\omega_3$.
\end{proof}

\begin{remark}
Using a suitable modification of Mitchell's forcing, we can similarly obtain a model of $(\omega_3,\omega_2)\chang(\omega_2,\omega_1)$ plus the tree property at $\omega_3$, starting from a huge cardinal.
\end{remark}

\bibliographystyle{amsplain.bst}
\bibliography{compactvhuge.bib}

\end{document}